\numberwithin{equation}{section}
\newtheorem{Theorem}{Theorem}[section]
\newtheorem{Lemma}{Lemma}[section]
\newtheorem{Assumption-Notation}[Theorem]{Assumption-Notation}
\newtheorem{Proposition}{Proposition}[section]
\newtheorem{Remark}{Remark}[section]
\long\def\symbolfootnote[#1]#2{\begingroup
\def\thefootnote{\fnsymbol{footnote}}\footnote[#1]{#2}\endgroup}
\begin{document}
\begin{frontmatter}
\title{Alternative approach to derive q-potential measures of refracted spectrally L\'evy processes}
 \author{Jiang Zhou\corref{cor1}}
 \ead{1101110056@pku.edu.cn}
 \author{Lan Wu\corref{}}
 \ead{lwu@pku.edu.cn}
\cortext[cor1]{Corresponding author.}

\address{School of Mathematical Sciences, Peking University, Beijing 100871, P.R.China}

\begin{abstract}{
For a refracted L\'evy process driven by a spectrally negative L\'evy process, we use a different approach to derive expressions for its q-potential measures without killing. Unlike previous methods whose derivations depend on scale functions which are defined only for spectrally negative L\'evy processes, our approach is free of scale functions. This makes it possible to extend the result here to a quite general refracted L\'evy process by applying the approach presented below.
}
\end{abstract}

\begin{keyword} Refracted L\'evy process; Spectrally negative L\'evy process; Continuity theorem.
\end{keyword}
\end{frontmatter}

\section{Introduction}
A refracted L\'evy process $U=(U_t)_{t\geq 0}$, proposed in~\cite{KL}, is a stochastic process satisfying
\begin{equation}
U_t=X_t-\delta \int_{0}^{t}\textbf{1}_{\{U_s > b\}}ds,
\end{equation}
where $\delta, b \in \mathbb R$, $X=(X_t)_{t\geq 0}$ is a L\'evy process and for a given set $A$,   $\textbf{1}_{A}$ is the indicator function. Several papers have investigated refracted L\'evy processes and many results have been derived. For example, the expression of q-potential measures of $U$ without killing (i.e., $\mathbb E_x\left[\int_{0}^{\infty}e^{-q t}\rm{\bf{1}}_{\{U_t \in dy\}}dt\right]$) is deduced, see Theorem 6 (iv) in~\cite{KL} for the case of $X$ being a
spectrally negative L\'evy process and Corollary 4.1 in~\cite{Zhouthe} for the case that $X$ is a jump diffusion process with jumps having rational Laplace transforms. Formulas for the joint Laplace transform of $(\nu_a^-, \int_{0}^{\nu_a^-}\textbf{1}_{\{U_s< b\}}ds)$ (here $\nu_a^-$ is the first passage time of $U$) are also obtained, see~\cite{R} for the case that $X$ is a L\'evy process without positive jumps  and~\cite{ZhouSPL} for the case of $X$ being a double exponential jump diffusion process. For other results, see~\cite{KPP,LCR}. A very similar process to $U$ has been studied in~\cite{ZhouIME} under a hyper-exponential jump diffusion model, see also~\cite{Wu} for a more general model.

To investigate refracted L\'evy processes, the following question needs to be considered first: Is there a
strong solution $U$ to (1.1) for a given L\'evy process $X$? If $X$ has non-zero Gaussian component, some results for the existence of a solution to (1.1) have been derived, the reader is referred to, e.g.,~\cite{St}. If $X$ is a spectrally negative L\'evy process with some limitations satisfied, the authors in~\cite{KL} have showed that there is a unique strong solution $U$ to (1.1) by applying the following method: they first investigate  the case of $X$ with bounded variation, and then consider the case that $X$ has paths of unbounded variation.

In this paper, another approach is established to show that there is a strong solution to (1.1) if $X$ is a spectrally negative L\'evy process and satisfies some additional conditions. Different from the discussion presented in~\cite{KL}, we start with a situation under which $X$ is of unbounded variation and then study the general case via an approximation argument. It is commented that some derivations in~\cite{KL} depend heavily on the scale functions which are defined only for spectrally negative L\'evy processes, thus they cannot be extended to the case that $X$ has both positive and negative jumps. However, our approach is free of scale functions and still works for a general L\'evy process.
It is hoped that our approach and idea will play a role in future research on refracted L\'evy processes.

For a refracted L\'evy process $U$ driven by a spectrally negative L\'evy process $X$, in~\cite{KL}, the authors have derived the expression of $\mathbb E_x\left[\int_{0}^{\infty}e^{-q t}\rm{\bf{1}}_{\{U_t \in dy\}}dt\right]$. In this paper,  we obtain another formula for $\mathbb E_x\left[\int_{0}^{\infty}e^{-q t}\rm{\bf{1}}_{\{U_t \in dy\}}dt\right]$. Unlike the results in~\cite{KL}, which are written in terms of scale functions, our results are written in terms of Wiener-Hopf factors (see (2.15) below). In~\cite{Zhouthe}, we have showed that (2.15) holds for a refracted L\'evy process driven by a jump diffusion process with jumps of rational Laplace transforms and proposed a conjecture: Formula (2.15) holds for a more general refracted L\'evy process. Theorem 2.2 below confirms that the above conjecture is correct for a refracted spectrally negative L\'evy process. Based on the ideas in this paper, we want to prove the above mentioned conjecture in our future research.

The remainder of this paper is organized as follows. In Section 2, we give some notations and the main results. Then in Section 3, we give some preliminary results and prove the main results. Finally, some further discussions and remarks are presented in Section 4.

\section{Notations and main results}
\subsection{Notations}
In this section and the next one, the process $X=(X_t)_{t\geq 0}$ in (1.1) is a general spectrally negative L\'evy process. As in~\cite{KL}, for given $\delta \in \mathbb R$,
introduce the following process $Y=\{Y_t=X_t-\delta t;t \geq 0\}$. Define $\underline{X}_{T}:=\inf_{0\leq t \leq T}X_t$ and $\overline{X}_{T}:=\sup_{0\leq t \leq T}X_t$ for any given $T>0$. For the process $Y$, we have similar definitions.

The law of $X$ ($Y$) starting from $x$ ($y$) is denoted by $\mathbb P_x$ ($\hat{\mathbb P}_y$), and $\mathbb E_x$ ($\hat{\mathbb E}_y$) denotes the corresponding expectation, and when $x = 0$ ($y=0$), we write $\mathbb P$ ($\hat{\mathbb P}$) and $\mathbb E$ ($\hat{\mathbb E}$) to simplify the notation. For given $q > 0$, the variable $e(q)$, independent of all other stochastic processes,
is an exponential random variable, whose expectation equals to $\frac{1}{q}$.

For a spectrally negative L\'evy process $X$, it is known that its Laplace exponent $\psi(z)$ is given by
\begin{equation}
\begin{split}
\psi(z)=\ln (\mathbb E\left[e^{z X_1}\right])
&=\frac{1}{2}\sigma^2 z^2+ \gamma z -\int_{(1,\infty)}(1-e^{- zx})\Pi(dx)\\
&-\int_{(0,1)}(1-e^{-z x}-z x)\Pi(dx), \ \ z \geq 0,
\end{split}
\end{equation}
where $\gamma \in \mathbb R$, $\sigma \geq 0$ and the L\'evy measure $\Pi$ satisfies $\int_{0}^{\infty}(x^2\wedge 1)\Pi(dx)< \infty$. If $\sigma=0$ and $\int_{(0,1)}x\Pi(dx)<\infty$, then
$X$ has paths of bounded variation and the Laplace exponent $\psi(\theta)$ can be written as
\begin{equation}
\psi(\theta)= d \theta -\int_{0}^{\infty}(1-e^{-\theta x})\Pi(dx),
\end{equation}
where $d=\gamma + \int_{(0,1)} x\Pi(dx)>0$. As in~\cite{KL}, we require that $d>\delta >0$ if $X$ is of bounded variation. For the sake of convenience, denote by $\mathbb S\mathbb N\mathbb L\mathbb P$ the space of all spectrally negative L\'evy processes $X$ with unbounded variation or bounded variation but satisfying $d>\delta >0$.

Similar to~\cite{KL}, for any given $q> 0$, we define
\begin{equation}
\Phi(q)=\sup\{\theta \geq 0: \psi(\theta)=q\} \ \ and \ \ \varphi(q)=\sup\{\theta \geq 0: \psi(\theta)-\delta \theta =q\},
\end{equation}
where $\psi(\theta)$ is given by (2.1).
Note that $\psi(\theta)-\delta \theta$ is the Laplace exponent of $Y$, i.e., $\psi(\theta)-\delta \theta=\ln (\hat{\mathbb E}\left[e^{\theta Y_1}\right])$ for $\theta \geq 0$.

For $q>0$ and $s \geq 0$, we know (see, e.g., (8.2) in~\cite{K})
\begin{equation}
\mathbb E\left[e^{-s\overline{X}_{e(q)}}\right]=\frac{\Phi(q)}{\Phi(q)+s}, \ \ \mathbb E\left[e^{s\underline{X}_{e(q)}}\right]=\frac{q}{\Phi(q)}\frac{\Phi(q)-s}{q-\psi(s)},
\end{equation}
and
\begin{equation}
\hat{\mathbb E}\left[e^{-s\overline{Y}_{e(q)}}\right]=\frac{\varphi(q)}{\varphi(q)+s}, \ \
\hat{\mathbb E}\left[e^{s\underline{Y}_{e(q)}}\right]=\frac{q}{\varphi(q)}\frac{\varphi(q)-s}{q-(\psi(s)-\delta s)},
\end{equation}
where $\Phi(q)$ and $\varphi(q)$ are given by (2.3).

\begin{Remark}
For a given L\'evy process $X$ and $q>0$, the quantity $\mathbb E\left[e^{-s\overline{X}_{e(q)}}\right]$
$\left(\mathbb E\left[e^{s\underline{X}_{e(q)}}\right]\right)$ is called the positive (negative) Wiener-Hopf factor of $X$.
\end{Remark}

Introduce first the following functions. $K_q(x)$ is the convolution of $\underline{Y}_{e(q)}$ under $\hat{\mathbb P}$ and $\overline{X}_{e(q)}$ under $\mathbb P$, i.e.,
\begin{equation}
K_q(x)=\int_{-\infty}^{\min\{0,x\}}\mathbb P\left(\overline{X}_{e(q)} \leq x-z\right)\hat{\mathbb P}\left(\underline{Y}_{e(q)} \in dz \right), \ \ x \ \in \mathbb R,
\end{equation}
$F_1(x)$ is a  function on $(0, \infty)$ with the Laplace transform:
\begin{equation}
\int_{0}^{\infty} e^{-s x}F_1(x)dx
=\frac{1}{s}\left(\frac{\hat{\mathbb E}\left[e^{-s \overline{Y}_{e(q)}}\right]}
{\mathbb E\left[e^{-s \overline{X}_{e(q)}}\right]}-1\right), \ \ s > 0,
\end{equation}
and $F_2(x)$ is a  function on $(-\infty, 0)$ with the Laplace transform:
\begin{equation}
\begin{split}
&\int_{-\infty}^{0} e^{sz}F_2(z)dz
=\frac{1}{s}\left(\frac{\mathbb E\left[e^{s \underline{X}_{e(q)}}\right]}
{\hat{\mathbb E}\left[e^{s \underline{Y}_{e(q)}}\right]}-1\right), \ \ s > 0,
\end{split}
\end{equation}
Define
\[
F_1(0):=\lim_{x \downarrow 0}F_1(x), \ \ F_2(0):=\lim_{x \uparrow 0}F_2(x),
\]
and
\begin{equation}
F_1(\infty):=\lim_{x \uparrow \infty}F_1(x)=\lim_{s \downarrow 0}\int_{0}^{\infty}se^{-s x}F_1(x)dx=0= F_2(-\infty):=\lim_{x \downarrow -\infty}F_2(x).
\end{equation}

Formulas (2.4) and (2.5) give
\begin{equation}
\lim_{s \uparrow \infty}\frac{\hat{\mathbb E}\left[e^{-s \overline{Y}_{e(q)}}\right]}
{\mathbb E\left[e^{-s \overline{X}_{e(q)}}\right]}=\lim_{s \uparrow \infty}\frac{\varphi(q)}{\varphi(q)+s}\frac{\Phi(q)+s}{\Phi(q)}=
\frac{\varphi(q)}{\Phi(q)}.
\end{equation}
It is obvious that $\lim_{\theta \uparrow \infty}\frac{\psi(\theta)}{\theta}=\infty$ if $\sigma > 0$. If $\int_{(0,1)}x\Pi(dx)=\infty$, we note first that $\theta x +e^{-\theta x}-1 \geq 0$ for $0<x <1$ and $\theta \geq 0$, and obtain (by applying the Fatou lemma)
\[
\lim_{\theta \uparrow \infty}\frac{-\int_{(0,1)}(1-e^{-\theta x}-\theta x)\Pi(dx)}{\theta} \geq
\int_{(0,1)} x\Pi(dx)= \infty.
\]
These results and formulas (2.4) and (2.5) will produce
\begin{equation}
\begin{split}
\lim_{s \uparrow \infty} \frac{ \mathbb E\left[e^{s\underline{X}_{e(q)}}\right]}{ \hat{\mathbb E}\left[e^{s\underline{Y}_{e(q)}}\right]}
&=\lim_{s \uparrow \infty} \frac{\varphi(q)}{q}\frac{q-(\psi(s)-\delta s)}{\varphi(q)-s}\frac{q}{\Phi(q)}\frac{\Phi(q)-s}{q-\psi(s)}\\
&=\frac{\varphi(q)}{\Phi(q)}\lim_{s \uparrow \infty} \frac{q-(\psi(s)-\delta s)}{q-\psi(s)}\\
&=\left\{\begin{array}{cc}
\frac{\varphi(q)}{\Phi(q)}, &\ if \ \sigma>0 \ or\ \int_{(0,1)}x\Pi(dx)=\infty,  \\
\frac{\varphi(q)}{\Phi(q)}\frac{d-\delta}{d}, &\ \ otherwise,
\end{array}\right.
\end{split}
\end{equation}
where $d$ is given by (2.2).

Noting that $F_1(0)=\lim_{s \uparrow \infty} \int_{0}^{\infty}
se^{-sx}F_1(x)dx$, we can obtain from (2.7) and (2.10) that
\begin{equation}
F_1(0)=\frac{\varphi(q)}{\Phi(q)}-1.
\end{equation}
Similarly, we can derive that
\begin{equation}
F_2(0)=\left\{\begin{array}{cc}
\frac{\varphi(q)}{\Phi(q)}-1, &\ if \ \sigma>0 \ or\ \int_{(0,1)}x\Pi(dx)=\infty,  \\
\frac{\varphi(q)}{\Phi(q)}\frac{d-\delta}{d}-1, &\ \ otherwise.
\end{array}\right.
\end{equation}

\begin{Remark}
It follows from (2.12) and (2.13) that
\[
\begin{split}
\left\{\begin{array}{cc}
F_1(0)=F_2(0), &\ if \ \sigma>0 \ or\ \int_{(0,1)}x\Pi(dx)=\infty,  \\
F_1(0)\neq F_2(0)\ \ if \ \ \delta \neq 0, &\ \ otherwise.
\end{array}\right.
\end{split}
\]
\end{Remark}

\subsection{Main results}
The main purpose of this paper is to show the following Theorems 2.1 and 2.2, and the details on the derivation are left to Section 3.

\begin{Theorem}
For any $X \in \mathbb S\mathbb N\mathbb L\mathbb P$, equation (1.1) exists a strong solution $U=(U_t)_{t\geq 0}$. Moreover,
\begin{equation}
\mathbb P_x\left(U_{e(q)}=y\right)=0, \ \ for \ \ all \ \  q>0 \  and \ \ y \in  \mathbb R.
\end{equation}
\end{Theorem}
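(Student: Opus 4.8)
The plan is to prove the two assertions separately and, following the strategy announced in the introduction, to treat the unbounded-variation case first and recover the full class $\mathbb S\mathbb N\mathbb L\mathbb P$ by approximation. First I would remove the implicit indicator by passing to the occupation functional $A_t:=\int_0^t\mathbf 1_{\{U_s>b\}}\,ds$. Since $U_t=X_t-\delta A_t$, solving (1.1) is equivalent to solving, for each fixed path of $X$, the scalar Carath\'eodory equation
\[
\dot A_t=f(t,A_t),\qquad f(t,a):=\mathbf 1_{\{X_t-\delta a>b\}},\qquad A_0=0 .
\]
The decisive feature is that, because $\delta>0$, the map $a\mapsto f(t,a)$ is non-increasing. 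This one-sided (dissipative) monotonicity gives pathwise uniqueness immediately: if $A$ and $\tilde A$ both solve the equation then $\frac{d}{dt}(A_t-\tilde A_t)^2=2(A_t-\tilde A_t)\bigl(f(t,A_t)-f(t,\tilde A_t)\bigr)\le 0$, so $A\equiv\tilde A$ and hence $U=X-\delta A$ is unique. As $A$ is constructed pathwise from $X$, the resulting $U$ is adapted to the natural filtration of $X$, that is, it is a strong solution.

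For existence I would regularise, replacing $\mathbf 1_{\{u>b\}}$ by Lipschitz non-decreasing functions $h_n\uparrow\mathbf 1_{\{\cdot>b\}}$ and solving the Lipschitz equation $A^n_t=\int_0^t h_n(X_s-\delta A^n_s)\,ds$. The uniform bounds $0\le A^n_t\le t$ and $|A^n_t-A^n_s|\le|t-s|$ let me extract, via Arzel\`a--Ascoli, a locally uniform limit $A$, and set $U:=X-\delta A$. The one real difficulty is the passage to the limit inside the integral: one must show $\int_0^t h_n(U^n_s)\,ds\to\int_0^t\mathbf 1_{\{U_s>b\}}\,ds$, and the only obstruction is the time $U$ spends exactly at $b$. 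I therefore expect the crux of the whole argument to be the identity
\[
\int_0^t\mathbf 1_{\{U_s=b\}}\,ds=0\quad\text{a.s.}
\]
Here the hypothesis $X\in\mathbb S\mathbb N\mathbb L\mathbb P$ enters: in the unbounded-variation regime the non-degenerate Gaussian or infinite-activity component prevents $U$ from being sticky at $b$, so the level $b$ carries no occupation time; taking expectations and using Tonelli reduces this to $\mathbb E_x\bigl[\int_0^\infty e^{-qt}\mathbf 1_{\{U_t=b\}}\,dt\bigr]=0$. Granted this, dominated convergence identifies the limit as a genuine solution.

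For the no-atom statement I would exploit the independence of $e(q)$ to write
\[
\mathbb P_x\bigl(U_{e(q)}=y\bigr)=q\int_0^\infty e^{-qt}\,\mathbb P_x(U_t=y)\,dt ,
\]
so that the claim amounts to the $q$-resolvent of $U$ having no atoms. The level $y=b$ is settled by the occupation identity above. For $y\neq b$ I would localise by the strong Markov property at the last crossing of $b$ before $e(q)$: on one side of $b$ the process $U$ moves as a spatial translate of $X$, on the other as a translate of $Y=X-\delta t$, and after integrating in time the $q$-resolvent of each of these spectrally negative processes is absolutely continuous --- the time integration smooths out even the atom that $X_t$ may carry in the bounded-variation finite-activity case, since $d>0$ moves that atom with $t$. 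Transferring this through the decomposition yields $\mathbb P_x(U_{e(q)}=y)=0$ for every $y$.

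Finally, a bounded-variation $X\in\mathbb S\mathbb N\mathbb L\mathbb P$ with $d>\delta>0$ is approximated by the unbounded-variation processes $X^{(n)}:=X+\tfrac1n B$ with $B$ an independent Brownian motion. The previous steps give solutions $U^{(n)}$ enjoying both properties, and since $\psi^{(n)}(\theta)=\psi(\theta)+\tfrac{1}{2n^2}\theta^2\to\psi(\theta)$ the exponents $\Phi^{(n)}(q),\varphi^{(n)}(q)$ and the Wiener--Hopf factors (2.4)--(2.5) converge; a continuity (weak-convergence) argument then passes both the equation and the absence of atoms to the limit $U$. The principal obstacle throughout remains the control of the time spent at $b$: near $b$ the driver crosses the level infinitely often and the drift is discontinuous, and it is precisely the monotone reformulation together with the non-atomicity of the driver that makes this tractable.
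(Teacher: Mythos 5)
Your plan correctly identifies the crux of the whole theorem --- that $U$ must spend zero Lebesgue time at the level $b$, equivalently that the law of $U_{e(q)}$ has no atom at $b$ --- but at exactly that point you assert rather than prove. The sentence ``the non-degenerate Gaussian or infinite-activity component prevents $U$ from being sticky at $b$'' is the theorem, not an argument. When $\sigma>0$ one can indeed make this rigorous (the candidate $U=X-\delta A$ is a semimartingale with $\langle U^c\rangle_t=\sigma^2 t$, and the occupation density formula gives $\sigma^2\int_0^t \mathbf{1}_{\{U_s=b\}}\,ds=\int_{\mathbb R}\mathbf{1}_{\{a=b\}}L_t^a\,da=0$), but the class $\mathbb S\mathbb N\mathbb L\mathbb P$ contains pure-jump spectrally negative processes of unbounded variation, where $\langle U^c\rangle\equiv 0$ and this tool vanishes; no substitute is offered. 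Worse, at the stage where you need the identity, $U$ is only an Arzel\`a--Ascoli limit point: it is not yet known to solve (1.1), to be Markov, or to have any tractable law, so excursion-theoretic or regularity arguments about ``the solution'' are circular. The same hole reappears in your final bounded-variation step: weak convergence of $U^{(n)}_{e(q)}$ does \emph{not} preserve atomlessness (weak limits can create atoms), so ``a continuity (weak-convergence) argument then passes \dots the absence of atoms to the limit'' fails as stated. The paper plugs precisely this gap with hard quantitative input you do not have: explicit formulas for the law of the approximating $U^n$ driven by rational-jump diffusions (Lemma 3.2, from prior work), convergence of the Wiener--Hopf factors, the continuity theorem for Laplace transforms giving uniform convergence $F_1^n\to F_1$ (Lemma 3.6, Proposition 3.1), and the continuity of $K_q$, $F_1$, $F_2$ (Theorem 2.3); continuity of the explicitly identified limit law is what rules out the atom at $b$, and Lemma 3.5 (Kyprianou--Loeffen) then converts this into the solution property.

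Two secondary defects. First, for $y\neq b$ you invoke ``the strong Markov property at the last crossing of $b$ before $e(q)$'': last passage times are not stopping times, so this step is illegitimate as written, and it additionally presupposes a Markov structure for $U$ that is only available after well-posedness is settled; the paper instead gets all $y\in\mathbb R$ at once from the continuity of $K_q$, $F_1$, $F_2$ and bounded convergence (Step 3 of Section 3.3). Second, your monotone (dissipative) uniqueness argument requires $\delta>0$, but $\mathbb S\mathbb N\mathbb L\mathbb P$ imposes $d>\delta>0$ only in the bounded-variation case and allows arbitrary $\delta\in\mathbb R$, including $\delta<0$, for unbounded variation --- the paper's Remark 2.2 explicitly flags that uniqueness for $\delta<0$ is open, which is why Theorem 2.1 claims only existence. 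To your credit, the Carath\'eodory reformulation with Lipschitz regularization is a genuinely different, more pathwise route than the paper's distributional one, and your occupation-formula idea does give a clean self-contained proof in the genuinely diffusive case $\sigma>0$; but for the full class claimed, the central non-stickiness/atomlessness step is missing, so the proof as proposed does not go through.
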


\begin{Remark}
The above strong solution $U$ is unique if $\delta >0$ in (1.1), and the reader is referred to Proposition 15 in~\cite{KL} for the proof. But, if $\delta <0$, we cannot find an easy approach to show that $U$ is the unique solution to (1.1).
\end{Remark}

\begin{Theorem}
For any given $X \in \mathbb S\mathbb N\mathbb L\mathbb P$ and the associated strong solution $U$ given by Theorem 2.1, we have
\begin{equation}
\begin{split}
&\mathbb P_x\left(U_{e(q)} \in  dy\right)=q\int_{0}^{\infty}e^{-q t}\mathbb P_x\left(U_{t} \in  dy\right)dt=\\
&\left\{\begin{array}{cc}
\big(F_1(0)+1\big)K_q(dy-x)+\int_{b-x}^{y-x}F_1(dy-x-z)K_q(dz),&y \geq b,\\
\big(F_2(0)+1\big)K_q(dy-x)- \int_{y-x}^{b-x}F_2(dy-x-z)K_q(dz), & y < b,
\end{array}\right.
\end{split}
\end{equation}
where $K_q(x)$, $F_1(x)$ and $F_2(x)$ are given by (2.6), (2.7) and (2.8), respectively.
\end{Theorem}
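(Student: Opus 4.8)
The plan is to compute the $q$-resolvent of $U$ by decomposing its trajectory at the successive crossings of the refraction level $b$ and then evaluating each piece through the fluctuation identities (2.4)--(2.5). The structural fact I would exploit is that $X$, and hence $Y=X-\delta t$, is spectrally negative, so every upward passage of the level $b$ is continuous: whenever $U$ crosses $b$ from below it does so at exactly $b$, whereas downward crossings of $b$ may overshoot. This makes $b$ a clean regeneration point when reached from below and reduces the whole computation to three ingredients: the resolvent of $X$ run below $b$ and killed when it first creeps above $b$, the resolvent of $Y$ run above $b$ and killed when it first passes below $b$, and the overshoot law of $Y$ at its first downward passage of $b$.

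Concretely, I would fix the independent clock $e(q)$ and write $\mathbb P_x(U_{e(q)}\in dy)$ as a geometric sum over the excursions of $U$ above and below $b$, using the strong Markov property at each crossing time together with the lack of memory of $e(q)$. By (2.4)--(2.5) the upward fluctuations below $b$ are governed by $\overline X_{e(q)}$ and the downward fluctuations above $b$ by $\underline Y_{e(q)}$; assembling the surviving excursions should identify the common factor $K_q$ of (2.6) as the law of the independent sum $\overline X_{e(q)}+\underline Y_{e(q)}$, i.e. the reference contribution in which upward motion is read off from $X$ and downward motion from $Y$. The remaining discrepancy --- that above $b$ the genuine upward motion is that of $Y$, not $X$, and below $b$ the genuine downward motion is that of $X$, not $Y$ --- is exactly what the kernels $F_1$ and $F_2$ of (2.7)--(2.8) correct, since their Laplace transforms are built from the ratios $\hat{\mathbb E}[e^{-s\overline Y_{e(q)}}]/\mathbb E[e^{-s\overline X_{e(q)}}]$ and $\mathbb E[e^{s\underline X_{e(q)}}]/\hat{\mathbb E}[e^{s\underline Y_{e(q)}}]$.

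In practice I would carry this out on the Laplace-transform side: compute $\int_{\mathbb R} e^{-s(y-b)}\,\mathbb P_x(U_{e(q)}\in dy)$ separately for $y\ge b$ and for $y<b$, show that each equals the transform of the corresponding line of (2.15) using (2.10)--(2.13) for the boundary values $F_1(0)$ and $F_2(0)$, and then invert. The endpoint constants $F_1(0)+1=\varphi(q)/\Phi(q)$ and the two-case value of $F_2(0)+1$ from (2.12)--(2.13) should appear automatically as the mass that the reference term $K_q$ must carry at the boundary, which is also where the creeping-versus-overshoot dichotomy enters, explaining the genuinely different $F_2(0)$ in the bounded-variation case of Remark 2.2.

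The step I expect to be the main obstacle is controlling the overshoot at the downward crossings of $b$ and showing that, after summing the excursion series, all overshoot contributions recombine into the single clean convolution $\int_{b-x}^{y-x}F_1(dy-x-z)K_q(dz)$ and its analogue below $b$, rather than into an iterated, excursion-indexed sum; this is where the independence built into the Wiener--Hopf factorization must be used to collapse the geometric series. A secondary difficulty is passing from the infinite-variation case, where the decomposition is cleanest, to a general $X\in\mathbb S\mathbb N\mathbb L\mathbb P$: there I would approximate $X$ by a sequence of such processes, apply the formula to each, and use a continuity argument together with Theorem 2.1, in particular the atomlessness (2.14), to pass (2.15) to the limit.
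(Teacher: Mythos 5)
There is a genuine gap, and it sits exactly where you placed your main route. The alternating strong-Markov decomposition at the successive crossings of $b$, summed as a geometric series, is ill-posed in the case you call ``cleanest''. If $X\in\mathbb S\mathbb N\mathbb L\mathbb P$ has unbounded variation, then $0$ is regular for both half-lines, so immediately after $U$ creeps up to $b$ it passes strictly below $b$ again: the crossing times of $b$ accumulate and there is no well-defined alternating sequence of excursion pieces to sum, short of building an It\^o excursion theory with a local time at $b$ for $U$ itself --- machinery one does not have while the very existence of $U$ is still being established. The discrete crossing decomposition is legitimate only in the bounded-variation case (where the positive drifts $d$ and $d-\delta$ make $b$ irregular from below), so your plan ``do infinite variation directly, then approximate'' has the feasibility backwards. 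Moreover, even where the decomposition is legal, each downward crossing overshoots to a random point below $b$, so the series is a Markov renewal sum indexed by overshoots, not a geometric one; collapsing it into the two single convolutions with $F_1$ and $F_2$ is essentially the identity (2.15) itself. The Wiener--Hopf independence you invoke factorizes $X_{e(q)}$ (respectively $Y_{e(q)}$) at its own extrema and says nothing a priori about a path pieced together from both processes; you flag this collapse as ``the main obstacle'' but supply no mechanism for it, and this paper never performs that computation either.

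Your ``secondary'' route is in fact the paper's entire proof, and it removes the need for the direct computation: the base-case formula is not derived here but imported as Lemma 3.2 from the authors' earlier work, for jump diffusions of the form (3.1) with $\sigma>0$ and jumps with rational Laplace transforms, where the algebra closes explicitly. The real content is then the passage to the limit, and it requires more than the atomlessness (2.14) and the a.s.\ convergence $U^n\to U^\infty$ of Lemma 3.4 that you cite: one needs $K_q^n(x)\to K_q(x)$, which rests on the continuity of $K_q$ (Theorem 2.3 (iv), from the exponential law of $\overline X_{e(q)}$), and the uniform convergence plus uniform boundedness of $F_1^n$ and $F_2^n$ (Lemma 3.6), which in turn rest on the monotonicity and explicit representations (2.20)--(2.21) together with the continuity theorem for Laplace transforms; these feed the bounded convergence argument of Proposition 3.1. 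If you replace your direct excursion derivation by the citation of the rational-jump base case and prove these convergence lemmas, your fallback coincides with the paper's proof; as written, the proposal's primary route fails for unbounded variation and is incomplete even for bounded variation.
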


\begin{Remark}
We have to mention that formulas for $\mathbb P_x\left(U_{e(q)} \in  dy\right)$ and Theorem 2.1 have
been obtained in~\cite{KL} $($see Theorem 1 and Theorem 6 (iv) in that paper$)$. Here we use a different approach to establish these results. Our approach is useful and may be used to establish the following result: Theorems 2.1 and 2.2 hold for a general L\'evy process $X$ $($see Section 4 for more discussions$)$.
\end{Remark}

\begin{Remark}
Expressions of $\mathbb P_x\left(U_{e(q)} \in  dy\right)$ given by Theorem 6 (iv) in~\cite{KL} are written in terms of scale functions and are completely different from (2.15). For the case of $\delta >0$, we have confirmed in~\cite{Zhouanote} that formula (2.15) and Theorem 6 (iv) in~\cite{KL} are the same. If $\delta <0$, then $\varphi(q)<\Phi(q)$ (see (2.3)). This means that Theorem 6 (iv) in~\cite{KL} does not hold for $\delta <0$ since it contains the following quantity: $\int_{0}^{\infty}e^{-\varphi(q)z}
W^{(q)\prime}(z-y)dz$, which equals to $\infty$ as $\lim_{x \uparrow \infty} \frac{W^{(q)\prime }(x)}{e^{\Phi(q)x}}=\Phi(q)\Phi^{\prime}(q)>0$. Here, $W^{(q)}(x)$ is the q-scale function of $X$ and $y \in\mathbb R$ is given. The interested reader is referred to~\cite{KL} and chapter $8$ in~\cite{K} for more details.
\end{Remark}

At the end  of this subsection, some properties of $K_q(x)$, $F_1(x)$ and $F_2(x)$ are given in the following theorem, which will play an important role in the proof of Theorems 2.1 and 2.2.

\begin{Theorem}
For $X$ in $\mathbb S\mathbb N\mathbb L\mathbb P$, the following results hold.

(i) If $\delta >0$ $(\delta <0)$, then $F_1(x)$ is decreasing (increasing) on $[0, \infty)$ and satisfies
\begin{equation}
\int_{0}^{\infty} e^{-s x}d F_1(x)=\frac{\hat{\mathbb E}\left[e^{-s \overline{Y}_{e(q)}}\right]}
{\mathbb E\left[e^{-s \overline{X}_{e(q)}}\right]}-\frac{\varphi(q)}{\Phi(q)}, \ \ s\geq 0.
\end{equation}
Besides, for $x \in \mathbb R$ and $\delta \in \mathbb R$, it holds that
\begin{equation}
-1 \leq F_1(x) \leq 0\vee \frac{\varphi(q)-\Phi(q)}{\Phi(q)}.
\end{equation}

(ii) If $\delta >0$ $(\delta <0)$, then $e^{\varphi(q)x}F_2(x)$ is increasing (decreasing) on $(-\infty,0]$ and satisfies
\begin{equation}
\int_{-\infty}^{0} e^{sx} d (e^{\varphi(q)x}F_2(x))=
\frac{s}{s+\varphi(q)}\left(1-\frac{\mathbb E\left[e^{(s+\varphi(q)) \underline{X}_{e(q)}}\right]}
{\hat{\mathbb E}\left[e^{(s+\varphi(q)) \underline{Y}_{e(q)}}\right]}\right)+F_2(0), \ \ s\geq 0.
\end{equation}
In addition, we have
\begin{equation}
-1 \leq e^{\varphi(q)x}F_2(x) \leq 0 \vee\frac{\varphi(q)-\Phi(q)}{\Phi(q)}.
\end{equation}

(iii) For $\delta \in \mathbb R$, both $F_1(x)$ and $e^{-\varphi(q)x}F_2(-x)$ are continuous on $[0,\infty)$.

(iv) $K_q(x)$ is continuous on $(-\infty,\infty)$.
\end{Theorem}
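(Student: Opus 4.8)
The plan is to turn every Laplace(--Stieltjes) transform in the statement into an explicit (signed) measure, exploiting that for a spectrally negative process both $\overline{X}_{e(q)}$ and $\overline{Y}_{e(q)}$ are exponentially distributed. For part (i) I would substitute (2.4)--(2.5) into (2.7); the two positive Wiener--Hopf factors are simple rational functions and the right-hand side collapses to $\tfrac{\varphi(q)-\Phi(q)}{\Phi(q)}\cdot\tfrac{1}{\varphi(q)+s}$, which inverts at once to $F_1(x)=\tfrac{\varphi(q)-\Phi(q)}{\Phi(q)}e^{-\varphi(q)x}$ on $[0,\infty)$. Monotonicity is then read off from the sign of $\varphi(q)-\Phi(q)$ (positive iff $\delta>0$), and (2.17) follows from $0<e^{-\varphi(q)x}\le 1$ and $\varphi(q)>0$. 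Identity (2.16) I would get by integrating (2.7) by parts, using $F_1(\infty)=0$ from (2.9) and the boundary value $F_1(0)$ from (2.12); the term $-F_1(0)$ combines with the $-1$ to leave exactly $-\varphi(q)/\Phi(q)$.

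The substance is part (ii), because $F_2$ has no closed form -- the ratio in (2.8) carries the factor $q-\psi(s)$. I would first establish (2.18) by integrating $\int_{-\infty}^{0}e^{sx}\,d\big(e^{\varphi(q)x}F_2(x)\big)$ by parts and then applying (2.8) at the shifted argument $s+\varphi(q)$, the boundary terms being controlled by $F_2(-\infty)=0$ and $F_2(0)$. The decisive step is to simplify the right-hand side: writing $q-(\psi(u)-\delta u)=\big(q-\psi(u)\big)+\delta u$ with $u=s+\varphi(q)$ and cancelling the factor $\varphi(q)-u$, the whole expression telescopes and, once the constant $F_2(0)+1-\varphi(q)/\Phi(q)$ is accounted for, reduces to $\tfrac{\delta\varphi(q)}{q}\,\mathbb{E}\big[e^{(s+\varphi(q))\underline{X}_{e(q)}}\big]$. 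By uniqueness of Laplace transforms this identifies
\[
d\!\left(e^{\varphi(q)x}F_2(x)\right)=\frac{\delta\varphi(q)}{q}\,e^{\varphi(q)x}\,\mathbb{P}\!\left(\underline{X}_{e(q)}\in dx\right),\qquad x\in(-\infty,0),
\]
a measure whose sign is exactly that of $\delta$; this gives the asserted monotonicity immediately. Combining this sign with the boundary values $e^{\varphi(q)x}F_2(x)\to 0$ as $x\downarrow-\infty$ (from (2.9)) and $=F_2(0)$ at $x=0$ pins the function between $0$ and $F_2(0)$, and the explicit value of $F_2(0)$ in (2.13) then delivers (2.19).

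Parts (iii) and (iv) I expect to be routine once (ii) is in hand. Continuity of $F_1$ is clear from its exponential form; continuity of $e^{-\varphi(q)x}F_2(-x)=e^{\varphi(q)w}F_2(w)$ ($w=-x$) is equivalent to the measure above having no atom, which holds on $(-\infty,0)$ because the law of $\underline{X}_{e(q)}$ is atomless there, while in the bounded-variation regime the would-be atom at $0$ is cancelled by the constant $F_2(0)+1-\varphi(q)/\Phi(q)=-\delta\varphi(q)/(d\Phi(q))$ against $\mathbb{P}(\underline{X}_{e(q)}=0)=q/(d\Phi(q))$ -- a short check using (2.13) and letting $s\uparrow\infty$ in (2.4). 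For (iv) I would read (2.6) as exhibiting $K_q$ as the distribution function of the independent sum $\overline{X}_{e(q)}+\underline{Y}_{e(q)}$; since $\overline{X}_{e(q)}$ is exponentially distributed, hence absolutely continuous, the sum is absolutely continuous and $K_q$ is continuous (equivalently, dominated convergence against the Lipschitz exponential c.d.f.\ gives this directly).

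The main obstacle is the cancellation in part (ii): recognising that the unwieldy ratio of negative Wiener--Hopf factors in (2.18), after the drift is reinstated through $q-(\psi(u)-\delta u)=(q-\psi(u))+\delta u$, collapses to a single clean term proportional to $\mathbb{E}[e^{(s+\varphi(q))\underline{X}_{e(q)}}]$. Keeping the constant bookkeeping straight -- in particular tracking the atom at $0$ separately in the bounded- and unbounded-variation cases, so that monotonicity, continuity, and the bounds (2.19) all flow from one positive measure -- is the delicate point; everything else is either an explicit inversion (part (i)) or a standard convolution-continuity argument (part (iv)).
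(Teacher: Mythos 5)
Your proposal is correct and follows essentially the same route as the paper: your explicit inversion $F_1(x)=\frac{\varphi(q)-\Phi(q)}{\Phi(q)}e^{-\varphi(q)x}$ is exactly the paper's (2.20), your measure identity $d\big(e^{\varphi(q)x}F_2(x)\big)=\frac{\delta\varphi(q)}{q}e^{\varphi(q)x}\,\mathbb{P}\big(\underline{X}_{e(q)}\in dx\big)$ is the differentiated form of the paper's representation (2.21), obtained via the same key cancellation $\psi(\varphi(q))-\delta\varphi(q)=q$, and your convolution-with-an-exponential-density argument for the continuity of $K_q$ is identical to the paper's proof of (iv). Your explicit bookkeeping of the atom of $\underline{X}_{e(q)}$ at $0$ in the bounded-variation case, cancelled against the constant $F_2(0)+1-\varphi(q)/\Phi(q)$, is a point the paper handles only implicitly through the open interval of integration in (2.21), but it is a refinement of, not a departure from, the same argument.
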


\begin{proof}

(i) From (2.4), (2.5) and (2.7), some straightforward calculations give
\begin{equation}
F_1(x)=\frac{\varphi(q)-\Phi(q)}{\Phi(q)}e^{-\varphi(q)x}, \ \ x>0.
\end{equation}
Noting that $\varphi(q)<\Phi(q)$ if $\delta <0$ and $\varphi(q)>\Phi(q)$ if $\delta >0$, we obtain that
$F_1(x)$ is decreasing (increasing) on $[0, \infty)$ if $\delta >0$ $(\delta <0)$. From (2.7) and (2.12), we can derive (2.16) by applying integration by parts. Formula (2.17) follows directly from (2.20).

(ii) Formulas (2.4), (2.5) and (2.8) yield
\[
\begin{split}
&\int_{-\infty}^{0}e^{sx}F_2(x)dx=\frac{1}{s}\left(\frac{\Phi(q)-s}{\Phi(q)(q-\psi(s))}
\frac{\varphi(q)(q-\varphi(s)+\delta s)}{\varphi(q)-s}-1\right)\\
&=\frac{\delta \varphi(q)}{\Phi(q)(\varphi(q)-s)}\frac{1}{s}\left(\frac{(\Phi(q)-s)s}{q-\psi(s)}
+\frac{\Phi(q)-s}{\delta}-\frac{\Phi(q)}{\delta \varphi(q)}(\varphi(q)-s)\right)\\
&=\frac{\delta \varphi(q)}{\Phi(q)(\varphi(q)-s)}\left(
\frac{\Phi(q)-s}{q-\psi(s)}+\frac{\Phi(q)-\varphi(q)}
{\delta \varphi(q)}\right)\\
&=\frac{\delta \varphi(q)}{q(\varphi(q)-s)}\left(\mathbb E\left[e^{s\underline{X}_{e(q)}}\right]-
\mathbb E\left[e^{\varphi(q)\underline{X}_{e(q)}}\right]
\right),
\end{split}
\]
where the final equality follows from (2.4) and the fact of $\psi(\varphi(q))-\delta \varphi(q)=q$ (see the definition of $\varphi(q)$ given by (2.3)). Thus we can obtain
\begin{equation}
F_2(x)=
\frac{\delta \varphi(q)}{q}\int_{(-\infty,x)}e^{-\varphi(q)(x-z)}\mathbb P\left(\underline{X}_{e(q)} \in dz\right),
\end{equation}
since both sides have the same Laplace transform.

It is easy to obtain that $e^{\varphi(q)x}F_2(x)$ is increasing (decreasing) on $(-\infty,0]$ if $\delta >0$ $(\delta <0)$. From (2.8), formula (2.18) follows from the application of integration by parts.
For the derivation of (2.19), we note first that
\[
e^{\varphi(q)x}F_2(x)\leq \frac{\delta \varphi(q)}{q}\int_{(-\infty,0)}e^{\varphi(q)z}\mathbb P\left(\underline{X}_{e(q)} \in dz\right)\leq \frac{\delta \varphi(q)}{q} \mathbb E\left[e^{\varphi(q)\underline{X}_{e(q)}}\right],\ \ \delta >0,
\]
and
\[
e^{\varphi(q)x}F_2(x)\geq \frac{\delta \varphi(q)}{q}\int_{(-\infty,0)}e^{\varphi(q)z}\mathbb P\left(\underline{X}_{e(q)} \in dz\right)\geq \frac{\delta \varphi(q)}{q} \mathbb E\left[e^{\varphi(q)\underline{X}_{e(q)}}\right], \ \ \delta <0,
\]
and then use the result of  $\mathbb E\left[e^{\varphi(q)\underline{X}_{e(q)}}\right]=\frac{q}{\Phi(q)}
\frac{\Phi(q)-\varphi(q)}{-\delta \varphi(q)}$ (which can be derived by using (2.4) and the fact of  $\psi(\varphi(q))-\delta \varphi(q)=q$).

(iii) The result can be obtained from (2.20) and (2.21).

(iv) Recall (2.6). The result follows easily from the following fact (see (2.4)):
\[
\mathbb P\left(\overline{X}_{e(q)}\in dy\right)=\Phi(q)e^{-\Phi(q)y}dy \ \ for \ \ y>0.
\]
\end{proof}

\begin{Remark}
If $\delta <0$, we know that $F_1(x)+1$ increases on $[0,\infty]$ with $F_1(0)+1=\frac{\varphi(q)}{\Phi(q)}$ and $F_1(\infty)+1=1$. So formula (2.20) implies that $F_1(x)+1$ is an infinitely divisible distribution on $[0,\infty)$. Applying integration by parts gives
\begin{equation}
\int_{[0,\infty)} e^{-s x}d\big(F_1(x)+1\big)=\frac{\hat{\mathbb E}\left[e^{-s \overline{Y}_{e(q)}}\right]}
{\mathbb E\left[e^{-s \overline{X}_{e(q)}}\right]}=e^{\int_{0}^{\infty}(e^{-sx}-1)\Pi_1(dx)}, \ \ s\geq 0.
\end{equation}
where
\begin{equation}
\Pi_{1}(dx)=\int_{0}^{\infty}\frac{1}{t}e^{-qt}\left(\hat{\mathbb P}\left(Y_t \in dx\right)-\mathbb P\left(X_t \in d x\right)\right)dt,
\end{equation}
and in the above derivation, we have used the following well-known result (see,
e.g., Theorem 5 on page 160 in~\cite{B}):
\[
\mathbb E\left[e^{-s \overline {X}_{e(q)}}\right]
=e^{\int_{0}^{\infty}\frac{1}{t}e^{-qt}\int_{0}^{\infty}(e^{-s x}-1)
\mathbb P\left(X_t\in dx\right)dt}, \ \ s\geq 0.
\]
From the L\'evy-Khintchine formula (see, e.g., Theorem 8.1
on page 37 in~\cite{Sa}), formula (2.22) means that $\Pi_1(dx)$ is a measure. This result can also be proved by using the following well-known Kendall's identity:
\begin{equation}
\frac{\mathbb P\left(X_t\in dx\right)}{t}dt=\frac{\mathbb P\left(\tau_x^+ \in dt\right)}{x}dx, \ \ t,x>0,
\end{equation}
and writing $\Pi_1(dx)$ as
\[
\Pi_1(dx)=\frac{1}{x}\left(\hat{\mathbb E}\left[e^{-q\hat{\tau}_x^+}\right]-\mathbb E\left[
e^{-q \tau_x^+}\right]\right)dx,
\]
where $\tau_x^+=\inf\{t>0, X_t> x\}$ and $\hat{\tau}_x^+=\inf\{t>0, Y_t>x\}$.
Since $Y_t=X_t-\delta t$ and $\delta<0$, we have $\hat{\mathbb E}\left[e^{-q\hat{\tau}_x^+}\right]-\mathbb E\left[
e^{-q \tau_x^+}\right]\geq 0$ for $x>0$ and $q>0$.
\end{Remark}

\begin{Remark}
The above remark states that $\Pi_1(dx)$ is a measure if $X \in \mathbb S\mathbb N\mathbb L\mathbb P$ and $\delta <0$, and formula (2.24) is an important condition for the proof of this result.
However, formula (2.24) holds only for a spectrally negative L\'evy process $X$ not for a general L\'evy process. Despite of this, we still believe that $\Pi_1(dx)$ is a measure for a general L\'evy process $X$ if $\delta <0$ (whose proof is left to future research).
\end{Remark}

\section{Proofs of Theorem 2.1 and Theorem 2.2}
In this section, we want to prove the main results and give first some preliminary results.

\subsection{Some preliminary results}

Consider the following L\'evy process $X_t$,
\begin{equation}
  X_t = X_0 + \mu t+\sigma W_t -\sum_{k=1}^{N_t}Z_k,
\end{equation}
where $\mu$ and $X_0$ are constants; $(W_t)_{t\geq 0}$ is a standard Brownian motion with $W_0=0$, and $\sigma > 0$ is the volatility of the diffusion; $\sum_{k=1}^{N_t}Z_k$ is a compound Poisson process with intensity $\lambda$ and the following jump distribution:
\begin{equation}
p(z):=\frac{\mathbb P\left(Z_1 \in dz \right)}{dz}=
\sum_{k=1}^{m}\sum_{j=1}^{m_k}d_{kj}\frac{(\vartheta_k)^jz^{j-1}}{(j-1)!}e^{-\vartheta_k z}, \ \ z > 0;
\end{equation}
moreover, $(W_t)_{t\geq 0}$, $(N_t)_{t\geq 0}$ and $\{Z_k; k=1,2,\ldots\}$ are independent mutually.
Note that $\vartheta_k$ and $d_{kj}$ in (3.2) can take complex value.

For the process $X$ given by (3.1), since $\sigma >0$ and the jump part of it is a compound Poisson process, the following lemma holds. For its proof, we refer the reader to, e.g., Theorem 305 in~\cite{St} or Theorem 5.9 and Remark 5.16 in~\cite{MP}.
\begin{Lemma}
If the process $X_t$ in (1.1) is given by (3.1), then there is a unique strong solution $U=(U_t)_{t\geq 0}$ to (1.1).
\end{Lemma}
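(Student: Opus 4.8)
The plan is to recast (1.1) as a stochastic differential equation and to build its solution piece by piece between the jumps of the compound Poisson part. Substituting (3.1) into (1.1) and differentiating formally, a process $U$ solves (1.1) if and only if
\begin{equation*}
dU_t = \big(\mu - \delta \mathbf{1}_{\{U_t > b\}}\big)\,dt + \sigma\,dW_t - dJ_t, \qquad U_0 = X_0,
\end{equation*}
where $J_t = \sum_{k=1}^{N_t} Z_k$ denotes the compound Poisson jump part in (3.1). The drift coefficient $g(u) := \mu - \delta \mathbf{1}_{\{u > b\}}$ is bounded and measurable but discontinuous at $u = b$, while the diffusion coefficient $\sigma$ is a positive constant, so the equation is uniformly elliptic.

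First I would exploit the finite-activity structure of $J$. Let $0 = T_0 < T_1 < T_2 < \cdots$ be the successive jump times of the Poisson process $(N_t)_{t \geq 0}$; since $N$ has finite intensity $\lambda$, we have $T_n \uparrow \infty$ almost surely. On each stochastic interval $[T_n, T_{n+1})$ the process $J$ is constant, so $U$ must satisfy the jump-free SDE
\begin{equation*}
dU_t = g(U_t)\,dt + \sigma\,dW_t,
\end{equation*}
started from the prescribed value $U_{T_n}$, and at the jump time it must undergo the deterministic update $U_{T_{n+1}} = U_{T_{n+1}-} - Z_{n+1}$.

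The core step is to establish existence and strong uniqueness for the continuous SDE above with the discontinuous drift $g$. Here I would invoke the theory of one-dimensional stochastic differential equations with bounded measurable drift and nondegenerate (uniformly elliptic) diffusion: by the Zvonkin--Veretennikov method one composes $U$ with a suitable strictly increasing $C^1$ function $\Lambda$, chosen to solve an auxiliary ODE that absorbs the drift, so that $\Lambda(U)$ obeys an SDE whose coefficients are regular enough for classical pathwise uniqueness to apply; since $\sigma>0$ guarantees that $\Lambda$ is a bi-Lipschitz bijection, strong existence and uniqueness transfer back to $U$. This is precisely the content of Theorem 305 in~\cite{St} and of Theorem 5.9 together with Remark 5.16 in~\cite{MP}, which I would cite to conclude this step.

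Finally I would paste the pieces together by induction on $n$. Assuming a unique strong solution has been constructed up to time $T_n$ with value $U_{T_n}$, the previous step yields a unique strong continuous solution on $[T_n, T_{n+1})$ started from $U_{T_n}$, and applying the jump $-Z_{n+1}$ at $T_{n+1}$ produces the value $U_{T_{n+1}}$, after which the strong Markov property lets the construction restart. Because $T_n \uparrow \infty$, concatenating over all $n$ gives a strong solution on $[0,\infty)$, and uniqueness on each block forces global pathwise uniqueness. The main obstacle is the discontinuity of $g$ at $u=b$, which rules out the standard Lipschitz-based It\^o theory; overcoming it is exactly where the nondegeneracy $\sigma>0$ in (3.1) is indispensable, since it is what makes the Zvonkin transformation available.
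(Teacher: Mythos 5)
Your proposal is correct and is essentially the paper's approach: the paper offers no argument of its own for Lemma 3.1, deferring entirely to Theorem 305 in \cite{St} and Theorem 5.9 with Remark 5.16 in \cite{MP}, which are exactly the results you invoke at the core step. Your interlacing construction between the jump times $T_n \uparrow \infty$ of the finite-activity compound Poisson part, combined with the Zvonkin--Veretennikov treatment of the one-dimensional SDE with bounded measurable drift $g(u)=\mu-\delta\mathbf{1}_{\{u>b\}}$ and nondegenerate constant diffusion $\sigma>0$, is the standard proof underlying those citations, so your write-up correctly supplies the details the paper leaves implicit.
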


In~\cite{Zhouthe}, we have derived the following result, see Theorems 4.1 and 4.2 in that paper.
\begin{Lemma}
For $X$ given by (3.1) and the associated unique strong solution $U$ to (1.1), we have
\begin{equation}
\left\{\begin{array}{cc}
\mathbb P_x\left(U_{e(q)}\leq y\right)=
K_q(y-x)+\int_{b-x}^{y-x}F_1(y-x-z)K_q(dz), &  y\geq b,\\
\mathbb P_x\left(U_{e(q)}< y\right)=
K_q(y-x)-\int_{y-x}^{b-x}F_2(y-x-z)K_q(dz), & y\leq b,
\end{array}
\right.
\end{equation}
where $K_q(x)$, $F_1(x)$ and $F_2(x)$ are given respectively by (2.6), (2.7) and (2.8).
\end{Lemma}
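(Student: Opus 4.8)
The plan is to treat $U$ as a strong Markov process (its existence and uniqueness being guaranteed by Lemma 3.1) and to compute the $q$-resolvent $\mathbb P_x(U_{e(q)}\in dy)=q\int_0^\infty e^{-qt}\mathbb P_x(U_t\in dy)\,dt$ directly, exploiting the fact that the jump density (3.2) has a rational Laplace transform. First I would reduce to $x=0$ by the spatial homogeneity of $X$: since (1.1) is driven by a L\'evy process, the shifted process $V_t:=U_t-x$ is again a refracted process driven by an $X$ started at $0$, now with refraction level $b-x$, which is exactly why the right-hand side of (3.3) depends only on $y-x$ and $b-x$. With $x=0$ fixed, the generator of $U$ acts as the generator $\mathcal{L}_X$ of $X$ on $(-\infty,b]$ and as $\mathcal{L}_Y=\mathcal{L}_X-\delta\partial_x$ on $(b,\infty)$, the extra drift $-\delta$ being precisely the effect of the term $-\delta\int_0^t\mathbf 1_{\{U_s>b\}}\,ds$.

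The core of the argument is a decomposition of the path of $U$ at the level $b$ under the independent exponential horizon $e(q)$. Because $X$ is spectrally negative, upward crossings of $b$ are continuous (the process creeps up to $b$), whereas downward passages across $b$ may occur either continuously or by a negative jump; this dichotomy is what splits (3.3) into the two regimes $y\ge b$ and $y<b$. Using the strong Markov property at the successive first-passage times across $b$, together with the lack of memory of $e(q)$ (so that each restarted piece is again an exponential-horizon problem), I would assemble $\mathbb P_0(U_{e(q)}\le y)$ from a leading term pairing the ascending behaviour of $X$ below $b$ with the descending behaviour of $Y$ above $b$, plus correction terms supported near $b$. The leading term should collapse to $K_q$, which by (2.6) is the law of the independent sum $\overline X_{e(q)}+\underline Y_{e(q)}$, while the corrections should be governed by $F_1$ above $b$ and $F_2$ below $b$, which by (2.7)-(2.8) are precisely the ratios of the one-sided Wiener-Hopf factors of $Y$ and $X$.

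To carry this out concretely for (3.1) I would use the rationality of $\psi$ and of $\psi(\cdot)-\delta\,\cdot\,$: the factors in (2.4)-(2.5) are then explicit rational functions, so $\overline X_{e(q)}$ is exponential of rate $\Phi(q)$ and $\underline X_{e(q)},\overline Y_{e(q)},\underline Y_{e(q)}$ have explicit mixed-exponential/Erlang laws. These make the overshoot distributions at $b$ computable and let me evaluate the pieces of the decomposition as finite partial-fraction sums, which I would show telescope into the compact expressions (2.6)-(2.8); the explicit forms $F_1(x)=\frac{\varphi(q)-\Phi(q)}{\Phi(q)}e^{-\varphi(q)x}$ and the representation (2.21) for $F_2$ from Theorem 2.3 are the target identities. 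As a more robust route that bypasses the excursion bookkeeping, I would instead guess-and-verify: reading $r(x,dy)$ off the right-hand side of (3.3), check that $u(x)=\int f(y)\,r(x,dy)$ solves $(\mathcal{L}-q)u=-f$ for nice $f$, reducing the integro-differential equation to ODEs on $(-\infty,b)$ and $(b,\infty)$ via the rational jump kernel and matching across $b$; here the continuity and monotonicity of $F_1,F_2,K_q$ from Theorem 2.3(iii)-(iv) supply the regularity needed at $b$, and the atom-free property (valid here since $\sigma>0$ gives $U_{e(q)}$ a density) pins down the matching constants.

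\textbf{Main obstacle.} The delicate step is establishing that the leading contribution is exactly the convolution $K_q=\overline X_{e(q)}*\underline Y_{e(q)}$ with the stated $F_1$/$F_2$ corrections, i.e. justifying the Wiener-Hopf-type independence between the ascending part below $b$ and the descending part above $b$ in a setting where the dynamics switch across $b$. In the path-decomposition route this requires careful control of the overshoots and of the excursions across $b$ accumulated before $e(q)$; in the guess-and-verify route it becomes the problem of checking the integro-differential matching conditions at $b$, where the interplay of the $\mathcal{L}_X$- and $\mathcal{L}_Y$-regimes and the discrepancy $F_1(0)\ne F_2(0)$ when $\delta\ne0$ (cf. (2.12)-(2.13)) must be handled exactly.
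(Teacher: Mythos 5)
The first thing to note is that the paper does not prove this lemma at all: Lemma 3.2 is imported verbatim from \cite{Zhouthe} (Theorems 4.1 and 4.2 there), so there is no internal derivation to match your sketch against. The external derivation that the paper relies on is, in spirit, your second route: it exploits the rationality of the jump transform (3.2), so that the Wiener--Hopf factors (2.4)--(2.5) are rational functions, and identifies the law of $U_{e(q)}$ by solving the associated first-passage/resolvent equations explicitly via partial fractions over the roots of $q-\psi(s)=0$ and $q-\psi(s)+\delta s=0$. So your instinct about which structural features make the lemma provable (rational kernel, explicit mixed-exponential laws for $\overline{X}_{e(q)}$, $\underline{X}_{e(q)}$, $\overline{Y}_{e(q)}$, $\underline{Y}_{e(q)}$) is the right one.

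That said, the proposal has a genuine gap, and you name it yourself: everything that is actually the content of the lemma --- that the leading term is exactly the convolution $K_q$ of $\overline{X}_{e(q)}$ and $\underline{Y}_{e(q)}$, and that the corrections are governed by $F_1$ and $F_2$ with Laplace transforms (2.7)--(2.8) --- is stated as a target, never derived. Your first route is worse than delicate: there is no Wiener--Hopf-type independence between the ascending behaviour below $b$ and the descending behaviour above $b$ for the refracted process, and no strong-Markov bookkeeping of excursions across $b$ will produce it for free; the functions $F_1,F_2$ quantify precisely the failure of such a factorization, so appealing to it is circular. Your second route is viable but incomplete as written: you would still need (a) the strong Markov/Feller property of $U$ (available from the strong uniqueness in Lemma 3.1), (b) a uniqueness statement for solutions of $(\mathcal{L}-q)u=-f$ in a class containing the candidate, and (c) the actual verification that the measure read off from (3.3) solves the equation, including the matching of value and (since $\sigma>0$) derivative at $b$ --- none of which is carried out. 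One small but telling slip: in the setting of this lemma $\sigma>0$, so by (2.12)--(2.13) (see Remark 2.2) one has $F_1(0)=F_2(0)$; the discrepancy $F_1(0)\neq F_2(0)$ you invoke at $b$ occurs only in the bounded-variation case, which (3.1) excludes. So the matching at $b$ is simpler than you fear, but the computation that would telescope the partial-fraction sums into (2.6)--(2.8) is exactly the missing proof.
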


A property of $p(z)$ given by (3.2) is that its Laplace transform is a rational function. Particularly, both linear combination of exponential distributions and phase-type distributions have the form of (3.2). Therefore, the following result holds and one can refer to Proposition 1 in~\cite{AAP} for its proof.
\begin{Lemma}
For any $X \in \mathbb S\mathbb N\mathbb L\mathbb P$, there is a sequence of L\'evy process $X_t^n$ having the form of (3.1) such that
\begin{equation}
\lim_{n \uparrow \infty} \sup_{s \in [0,t]}|X^n_s-X_s|=0, \ \ almost \ \ surely.
\end{equation}
\end{Lemma}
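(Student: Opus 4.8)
The plan is to build the approximating processes $X^n$ on a single probability space together with $X$ via a common L\'evy--Itô decomposition, so that a weak approximation of the L\'evy triplet is upgraded to pathwise uniform convergence. Write the L\'evy--Itô decomposition of $X$ driven by a Poisson random measure $N(ds,dx)$ on $(0,\infty)$ with intensity $ds\,\Pi(dx)$ (governing the downward jumps) and an independent Brownian motion $W$. The target form (3.1)--(3.2) supplies exactly three ingredients: a drift, a \emph{strictly positive} Gaussian coefficient $\sigma>0$, and a \emph{finite-activity} negative-jump part whose jump law has rational Laplace transform. Accordingly I must approximate simultaneously (a) the (possibly zero) Gaussian coefficient of $X$, (b) its infinite-activity small jumps, and (c) its general large-jump law, each time re-using the same $W$ and the same $N$ so that the errors can be estimated path by path.

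For (a), if $\sigma=0$ I add a term $\tfrac1n W_t$; since $W$ has continuous paths this contributes at most $\tfrac1n\sup_{s\le t}|W_s|\to0$ uniformly on $[0,t]$ almost surely, and if $\sigma>0$ the Gaussian part is kept unchanged. For (b), fix a truncation level $\e_n\downarrow0$ and keep only jumps of magnitude exceeding $\e_n$, compensating by the appropriate deterministic drift in the unbounded-variation case. The discarded compensated small-jump integral $M^n$ is an $L^2$-martingale, and Doob's maximal inequality gives $\mathbb{E}\big[\sup_{s\le t}|M^n_s|^2\big]\le 4t\int_{(0,\e_n)}x^2\Pi(dx)$; since these bounds telescope to the finite quantity $4t\int_{(0,\e_1)}x^2\Pi(dx)$ along $\e_n\downarrow0$, Borel--Cantelli yields almost sure uniform convergence on $[0,t]$ (this is just the standard almost sure uniform convergence of the L\'evy--Itô series on compacts). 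In the bounded-variation case no compensation is needed and the discarded part is dominated by $\sum_{s\le t}|\Delta X_s|\,\mathbf{1}_{\{|\Delta X_s|\le\e_n\}}$, which tends to $0$ almost surely by monotone convergence.

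For (c) the remaining part after truncation is a compound Poisson process with finite intensity $\lambda_n=\Pi((\e_n,\infty))$ and jump law $\mu_n=\Pi|_{(\e_n,\infty)}/\lambda_n$, whose jumps are the marks of $N$ above level $\e_n$. I replace $\mu_n$ by a distribution $p_n$ of the form (3.2). This is legitimate because distributions with rational Laplace transform, in particular the combinations of Erlang and phase-type laws appearing in (3.2), are dense in the weak topology on probability measures on $(0,\infty)$, which is exactly the content of Proposition 1 in \cite{AAP}. To turn weak closeness into pathwise closeness I couple $\mu_n$ and $p_n$ through their quantile (inverse-distribution) functions: feeding the same uniform mark attached to each point of $N$ into both quantile maps produces jumps $Z$ and $Z'$ with $|Z'-Z|$ small wherever the two distribution functions are close. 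Since $[0,t]$ carries only finitely many such jumps, choosing $p_n$ close enough to $\mu_n$ makes $\sup_{s\le t}|X^n_s-(\text{truncated }X)_s|$ as small as desired.

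Combining the three estimates by the triangle inequality and a diagonal choice of the parameters $(\sigma_n,\e_n,p_n)$ produces a single sequence $X^n$ of the form (3.1) with $\sup_{s\le t}|X^n_s-X_s|\to0$ almost surely; a further diagonalization over $t=1,2,\dots$ gives convergence on every compact interval. The main obstacle is precisely this upgrade from distributional to almost sure uniform convergence while the jump activity is infinite: the truncation level $\e_n$ and the quality of the phase-type fit $p_n$ must be tuned \emph{against each other}, so that although the number of retained jumps on $[0,t]$ grows as $\e_n\downarrow0$, the accumulated jump-size error still vanishes. The quantile coupling, together with a Borel--Cantelli argument along the diagonal sequence, is what makes this balancing work.
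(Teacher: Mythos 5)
Your construction is correct in substance, but you should know that the paper does not actually prove this lemma: its entire ``proof'' is the observation that the class (3.2) contains all phase-type laws, followed by a citation of Proposition 1 in \cite{AAP} --- and that proposition \emph{is} the full statement being proved here (almost sure, uniform-on-compacts approximation of an arbitrary L\'evy process by L\'evy processes with a Brownian component and phase-type jumps), not merely the weak density of phase-type distributions, which is what you attribute to it. Read literally, your appeal to that proposition for the density step is therefore circular; the fact you actually need --- density of phase-type (equivalently, rational-Laplace-transform) laws among probability measures on $(0,\infty)$ --- is classical and is an ingredient \emph{inside} the proof of AAP's Proposition 1, so the fix is only a matter of citation. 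In effect you have reconstructed, along essentially the same lines as \cite{AAP}, the argument the paper outsources: L\'evy--It\^o decomposition, a vanishing Gaussian perturbation $\tfrac1n W$ to force $\sigma>0$, compensated truncation of the small jumps controlled by Doob's inequality plus Borel--Cantelli (with $\e_n$ chosen so the $L^2$ bounds are summable --- ``telescoping'' alone does not give summability), and a coupled replacement of the truncated big-jump law. What your route buys is self-containedness and an explicit view of how $\e_n$ and the fit $p_n$ must be tuned against each other, which the paper's citation hides. One genuine soft spot: the quantile coupling cannot make the two quantile maps \emph{uniformly} close when $\Pi|_{(\e_n,\infty)}$ is heavy-tailed, because every law of the form (3.2) has an exponentially decaying tail, so $\sup_u|Q_{\mu_n}(u)-Q_{p_n}(u)|=\infty$ in that case; your sentence ``choosing $p_n$ close enough to $\mu_n$ makes $\sup_{s\le t}|X^n_s-(\text{truncated }X)_s|$ as small as desired'' is then false as a deterministic statement. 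The repair is standard and consistent with your closing remarks: excise the event that some uniform mark falls in $(1-\eta_n,1)$, with $\lambda_n t\,\eta_n$ summable (or use a Strassen/L\'evy--Prokhorov coupling with $\mathbb{P}\left(|Z-Z'|>\delta_n\right)<\delta_n$ and $\lambda_n t\,\delta_n$ summable), and only then invoke Borel--Cantelli; so the gap is one of execution, not of idea.
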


In the rest of this section, the process $X$ belongs to $\mathbb S\mathbb N\mathbb L\mathbb P$ and is given and fixed. For this fixed $X \in \mathbb S\mathbb N\mathbb L\mathbb P$, we can find a sequence of $X^n$ with the form of (3.1) such that (3.4) holds (see Lemma 3.3). For each $X^n$, denote by $U^n$ the unique strong solution to (1.1) (see Lemma 3.1).
For the sequence of $U^n$, the following Lemma 3.4 holds, and one can refer to Lemma 12 in~\cite{KL} for its derivation.
\begin{Lemma}
There is a stochastic process $U^{\infty}=(U^{\infty}_t)_{t\geq 0}$ satisfying
\begin{equation}
\lim_{n \uparrow \infty} \sup_{s \in [0,t]}|U^n_s-U^{\infty}_s|=0, \ \ almost \ \ surely.
\end{equation}
\end{Lemma}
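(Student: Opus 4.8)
The plan is to prove the lemma by showing that, for almost every $\omega$, the sequence $(U^n)_{n\ge 1}$ is uniformly Cauchy on every compact interval $[0,t]$. Since the space of bounded right-continuous functions on $[0,t]$ is complete under the uniform norm, and a uniform limit of c\`adl\`ag paths is again c\`adl\`ag, the process $U^\infty$ and the convergence (3.5) follow at once from the Cauchy property. Throughout I fix $\omega$ in the full-measure event on which (3.4) holds, so that $\|X^n-X\|_t:=\sup_{s\le t}|X^n_s-X_s|\to 0$ and in particular $(X^n)$ is itself uniformly Cauchy on $[0,t]$.

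First I would record the basic pathwise comparison coming from (1.1). Subtracting the defining equations for two indices $m,n$ gives
\[
U^n_t-U^m_t=(X^n_t-X^m_t)-\delta\int_0^t\big(\mathbf{1}_{\{U^n_s>b\}}-\mathbf{1}_{\{U^m_s>b\}}\big)\,ds,
\]
whence, with $\Delta^{mn}_t:=\sup_{s\le t}|U^n_s-U^m_s|$,
\[
\Delta^{mn}_t\le \|X^n-X^m\|_t+|\delta|\int_0^t\big|\mathbf{1}_{\{U^n_s>b\}}-\mathbf{1}_{\{U^m_s>b\}}\big|\,ds .
\]
The sole obstruction to a direct contraction argument is that the drift enters through the discontinuous map $v\mapsto \mathbf{1}_{\{v>b\}}$, so the integral term must be treated separately. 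The key observation is that its integrand is nonzero only when $b$ lies between $U^n_s$ and $U^m_s$, and then both values lie within $|U^n_s-U^m_s|$ of $b$; hence
\[
\int_0^t\big|\mathbf{1}_{\{U^n_s>b\}}-\mathbf{1}_{\{U^m_s>b\}}\big|\,ds\le \int_0^t\mathbf{1}_{\{|U^n_s-b|\le \Delta^{mn}_t\}}\,ds,
\]
i.e. the drift discrepancy is dominated by the Lebesgue time $U^n$ spends in a thin band around the refraction level $b$.

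When $\delta>0$ this can be bypassed entirely. Since $v\mapsto-\delta\mathbf{1}_{\{v>b\}}$ is then nonincreasing, the refraction difference $-\delta(\mathbf{1}_{\{U^n_s>b\}}-\mathbf{1}_{\{U^m_s>b\}})$ always has the opposite sign to $U^n_s-U^m_s$ (and vanishes when they coincide), so it can only pull $U^n$ and $U^m$ together. This monotone (accretive) structure yields the non-expansive bound $\Delta^{mn}_t\le\|X^n-X^m\|_t$, and the Cauchy property of $(U^n)$ is then immediate from that of $(X^n)$.

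For general $\delta$ — in particular $\delta<0$, which the present paper also admits — this shortcut is unavailable, and closing the self-referential estimate $\Delta^{mn}_t\le\|X^n-X^m\|_t+|\delta|\int_0^t\mathbf{1}_{\{|U^n_s-b|\le \Delta^{mn}_t\}}\,ds$ is the step I expect to be the main obstacle. What is needed is that the band occupation time $\int_0^t\mathbf{1}_{\{|U^n_s-b|\le\varepsilon\}}\,ds$ tends to $0$ as $\varepsilon\downarrow 0$, uniformly enough in $n$. This is exactly where the standing hypothesis $X\in\mathbb S\mathbb N\mathbb L\mathbb P$ is used: for paths of unbounded variation, or of bounded variation with $d>\delta>0$, the process crosses $b$ transversally and spends zero Lebesgue time at the single level $b$, so the occupation of a shrinking neighbourhood of $b$ vanishes; propagating this to the approximants through (3.4) lets one absorb the integral term and deduce $\Delta^{mn}_t\to0$. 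Once the uniform Cauchy property holds, completeness delivers $U^\infty$ and (3.5); this is the route of Lemma 12 in~\cite{KL}.
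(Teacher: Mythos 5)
Your proposal splits into two regimes, and they fare very differently. For $\delta>0$ your monotonicity argument is essentially \emph{the} proof: the paper itself offers no self-contained derivation of Lemma 3.4 but simply refers to Lemma 12 of \cite{KL}, and the mechanism there is exactly the accretivity you describe --- whenever $U^n_s-U^m_s>0$ the drift discrepancy $-\delta\big(\mathbf{1}_{\{U^n_s>b\}}-\mathbf{1}_{\{U^m_s>b\}}\big)$ is nonpositive, which caps the Lipschitz-continuous drift-difference term $D_s=-\delta\int_0^s\big(\mathbf{1}_{\{U^n_u>b\}}-\mathbf{1}_{\{U^m_u>b\}}\big)du$ by $\sup_{u\le t}|X^n_u-X^m_u|$ via a last-crossing-time argument. (One quibble: since $X^n-X^m$ has jumps while $D$ is continuous, the clean conclusion is $\sup_{s\le t}|U^n_s-U^m_s|\le 2\sup_{s\le t}|X^n_s-X^m_s|$ rather than your non-expansive constant $1$; this is immaterial for the Cauchy property.)

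The genuine gap is your treatment of $\delta<0$, which the lemma must cover, since $\mathbb S\mathbb N\mathbb L\mathbb P$ admits arbitrary $\delta\in\mathbb R$ for unbounded variation drivers (cf.\ Remark 2.2 and Theorem 2.3). Your closing scheme cannot work as sketched. Writing $\eta=\sup_{s\le t}|X^n_s-X^m_s|$ and $f_n(\theta)=\int_0^t\mathbf{1}_{\{|U^n_s-b|\le\theta\}}ds$, localizing at the first time the gap exceeds a threshold $\theta$ shows that closing the bootstrap $\Delta^{mn}_t\le\eta+|\delta|f_n\big(\Delta^{mn}_t\big)$ requires $\eta+|\delta|f_n(\theta)<\theta$, i.e.\ $f_n(\theta)=o(\theta)$ uniformly in $n$. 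But each $X^n$ in (3.1) has $\sigma>0$, so $U^n$ is a semimartingale with nondegenerate Brownian part and possesses an occupation density (local time) $L^b_t$ at level $b$; by the occupation times formula $f_n(\theta)$ is of exact order $\theta$ (roughly $2\theta L^b_t/\sigma_n^2$) whenever $L^b_t>0$, which occurs with positive probability. The transversality fact you invoke --- zero Lebesgue time spent \emph{at} the level $b$ --- gives only $f_n(\theta)\to 0$, never $o(\theta)$, so the integral term cannot be absorbed once $|\delta|$ times the occupation density exceeds one; no refinement of this estimate repairs it, because for $\delta<0$ the drift is genuinely repulsive (which is also why uniqueness is open in that case, Remark 2.2). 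Two further structural problems: the uniform-in-$n$ control of $f_n$ is nowhere supplied by (3.4), and within this paper the statement that the limit spends a.s.\ no time at $b$ is \emph{deduced from} Lemma 3.4 via Proposition 3.1 (Steps 1--2 of Section 3.3), so importing it into the proof of Lemma 3.4 inverts the paper's logical order. You have in fact put your finger on a real soft spot --- Lemma 12 of \cite{KL} is proved under $\delta>0$, and the paper extends the citation to $\delta<0$ without comment --- but your proposed repair does not close it.
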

In the following, our objective is to show that $U^{\infty}$ is a strong solution to (1.1) for the above given $X \in \mathbb S\mathbb N\mathbb L\mathbb P$, and we note first that the following result holds and refer the reader to Proposition 15 in~\cite{KL} for its derivation.
\begin{Lemma}
For any given $x \in \mathbb R$, if $\mathbb P_x\left(U^{\infty}_t=b\right)=0$ for Lebesgue almost surely every $t \geq 0$, then $U^{\infty}=(U^{\infty}_t)_{t\geq 0}$
is the strong solution to (1.1), i.e.,
\begin{equation}
U^{\infty}_t=X_t-\delta \int_{0}^{t}\textbf{1}_{\{U^{\infty}_s > b\}}ds, \ \ t\geq0,
\end{equation}
\end{Lemma}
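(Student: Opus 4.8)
The plan is to pass to the limit as $n \uparrow \infty$ in the defining relation for the approximating solutions, namely $U^n_t = X^n_t - \delta \int_0^t \mathbf{1}_{\{U^n_s > b\}}\,ds$, and to check that the limit reproduces (3.6). By Lemma 3.4 we have $U^n_t \to U^\infty_t$ and by Lemma 3.3 we have $X^n_t \to X_t$, both almost surely and uniformly on compact time intervals. Hence the only term demanding care is the occupation integral $\int_0^t \mathbf{1}_{\{U^n_s > b\}}\,ds$, because the map $u \mapsto \mathbf{1}_{\{u > b\}}$ is discontinuous precisely at the refraction level $b$, so pointwise convergence of paths does not by itself transfer to the indicators.

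First I would note that convergence of the indicator is automatic away from the level $b$: if $U^\infty_s(\omega) > b$ then $U^n_s(\omega) > b$ for all large $n$, and likewise if $U^\infty_s(\omega) < b$ then $U^n_s(\omega) < b$ for all large $n$, so in either case $\mathbf{1}_{\{U^n_s > b\}}(\omega) \to \mathbf{1}_{\{U^\infty_s > b\}}(\omega)$. The only problematic set is $\{(s,\omega): U^\infty_s(\omega) = b\}$, and this is exactly where the hypothesis enters. By Tonelli's theorem,
\[
\mathbb{E}_x\left[\int_0^t \mathbf{1}_{\{U^\infty_s = b\}}\,ds\right] = \int_0^t \mathbb{P}_x\left(U^\infty_s = b\right)\,ds = 0,
\]
the last equality being precisely the assumption that $\mathbb{P}_x(U^\infty_s = b) = 0$ for Lebesgue-a.e.\ $s$. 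Consequently, for almost every $\omega$ the time-set $\{s \in [0,t]: U^\infty_s(\omega) = b\}$ is Lebesgue-null, so $\mathbf{1}_{\{U^n_s > b\}}(\omega) \to \mathbf{1}_{\{U^\infty_s > b\}}(\omega)$ for Lebesgue-a.e.\ $s \in [0,t]$. Since these indicators are bounded by $1$ on the finite interval $[0,t]$, dominated convergence then yields $\int_0^t \mathbf{1}_{\{U^n_s > b\}}\,ds \to \int_0^t \mathbf{1}_{\{U^\infty_s > b\}}\,ds$ almost surely.

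Combining the three limits gives (3.6) for each fixed $t \geq 0$, almost surely. To upgrade this to an identity holding simultaneously for all $t \geq 0$, I would use that $U^\infty$ inherits c\`adl\`ag paths as a uniform limit of the c\`adl\`ag processes $U^n$, that $t \mapsto X_t$ is c\`adl\`ag, and that $t \mapsto \int_0^t \mathbf{1}_{\{U^\infty_s > b\}}\,ds$ is continuous; establishing the equality on the countable set of rational times (a countable intersection of almost-sure events is almost sure) and invoking right-continuity of both sides extends it to all $t$. Adaptedness of $U^\infty$ to the filtration of $X$ is inherited from the approximating solutions $U^n$, so $U^\infty$ is a genuine strong solution. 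The main obstacle throughout is the discontinuity of the indicator at $b$: everything rests on converting the probabilistic hypothesis into the pathwise statement that $U^\infty$ spends zero Lebesgue time at level $b$, which is exactly what makes the dominated-convergence passage legitimate.
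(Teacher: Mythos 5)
Your proof is correct and takes essentially the same approach as the paper, which does not argue Lemma 3.5 directly but defers to Proposition 15 of \cite{KL}: that argument is precisely your limit passage, converting the hypothesis via Tonelli into the pathwise statement that $U^{\infty}$ spends zero Lebesgue time at level $b$, deducing a.e.\ convergence of the indicators $\textbf{1}_{\{U^n_s>b\}}$, and applying dominated convergence together with Lemmas 3.3 and 3.4. One minor remark: your rational-times/c\`adl\`ag upgrade is superfluous, since the two exceptional null events (failure of uniform convergence on compacts and positive occupation time at $b$) do not depend on $t$, so the identity already holds simultaneously for all $t\geq 0$ on a single almost-sure event.
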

Therefore, to derive (3.6), it is enough to show that $\mathbb P_x\left(U^{\infty}_t=b\right)=0$ for Lebesgue almost surely every $t \geq 0$.

\subsection{An important result}
In this subsection, we present an important proposition. First, as the process $X^n$ has the form of (3.1), we can obtain from Lemma 3.2 that
\begin{equation}
\left\{\begin{array}{cc}
\mathbb P_x\left(U^n_{e(q)}\leq y\right)=
K^n_q(y-x)+\int_{b-x}^{y-x}F^n_1(y-x-z)K^n_q(dz), & y\geq b,\\
\mathbb P_x\left(U^n_{e(q)}< y\right)=
K^n_q(y-x)-\int_{y-x}^{b-x}F^n_2(y-x-z)K^n_q(dz), & y\leq b,
\end{array}\right.
\end{equation}
where $K^n_q(x)$ is the convolution of $\underline{Y}^n_{e(q)}$ under $\hat{\mathbb P}$ and $\overline{X}^n_{e(q)}$ under $\mathbb P$,
\begin{equation}
\begin{split}
\int_{0}^{\infty} e^{-s z}F^n_1(z)dz
=\frac{1}{s}\left(\frac{\hat{\mathbb E}\left[e^{-s \overline{Y}^n_{e(q)}}\right]}
{\mathbb E\left[e^{-s \overline{X}^n_{e(q)}}\right]}-1\right), \ \ s > 0,
\end{split}
\end{equation}
and
\begin{equation}
\int_{-\infty}^{0} e^{sz}F^n_2(z)dz
=\frac{1}{s}\left(\frac{\mathbb E\left[e^{s \underline{X}^n_{e(q)}}\right]}
{\hat{\mathbb E}\left[e^{s \underline{Y}^n_{e(q)}}\right]}-1\right), \ \ s > 0.
\end{equation}

In addition, define $\psi^n(\theta)=\ln\left(\mathbb E\left[e^{\theta X^n_1}\right]\right)$ for $\theta \geq 0$. For $q>0$, introduce
\begin{equation}
\Phi^n(q)=\sup\{\theta \geq 0: \psi^n(\theta)=q\}, \ \ \varphi^n(q)=\sup\{\theta \geq 0: \psi^n(\theta)-\delta \theta =q\}.
\end{equation}

For the above two probabilities $\mathbb P_x\left(U^n_{e(q)}\leq y\right)$ and $\mathbb P_x\left(U^n_{e(q)}< y\right)$ given by (3.7), the following proposition holds.

\begin{Proposition}
 For any given $q>0$, it holds that
\begin{equation}
\left\{\begin{array}{cc}
\lim_{n\uparrow \infty}\mathbb P_x\left(U^n_{e(q)} \leq y \right)=K_q(y-x)+\int_{b-x}^{y-x}F_1(y-x-z)K_q(dz), &  y > b,\\
\lim_{n \uparrow \infty}\mathbb P_x\left(U^n_{e(q)}< y\right)=K_q(y-x)-\int_{y-x}^{b-x}F_2(y-x-z)K_q(dz), &   y <b.
\end{array}\right.
\end{equation}
\end{Proposition}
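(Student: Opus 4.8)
The plan is to let $n\uparrow\infty$ in the two identities of (3.7) and identify the limits with the right-hand sides of (3.12). Each identity in (3.7) is the sum of the distribution value $K^n_q(y-x)$ and an integral of $F^n_i$ against the finite measure $K^n_q(dz)$, so it suffices to control three ingredients: the convergence of $K^n_q$ (both as a distribution function and as a measure), the convergence of $F^n_1$ and $F^n_2$, and the passage to the limit in the integral terms, where both the integrand and the integrating measure depend on $n$.

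First I would establish convergence of the one-sided extrema and of the exponents. Since the convergence in Lemma 3.3 holds simultaneously for all $t$ on a set of full measure, and $|\overline{X}^n_t-\overline{X}_t|\le\sup_{s\le t}|X^n_s-X_s|$ (with the analogous bounds for the infima and for $Y^n_t=X^n_t-\delta t$), it holds in particular at the independent random time $e(q)$; thus $\overline{X}^n_{e(q)}$, $\underline{X}^n_{e(q)}$, $\overline{Y}^n_{e(q)}$, $\underline{Y}^n_{e(q)}$ converge almost surely, hence in distribution, to their counterparts for $X$ and $Y$. Feeding these into the Wiener--Hopf identities (2.4)--(2.5) written for $X^n$ forces $\Phi^n(q)\to\Phi(q)$ and $\varphi^n(q)\to\varphi(q)$. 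Because $K_q$ is the distribution function of the independent sum $\underline{Y}_{e(q)}+\overline{X}_{e(q)}$ (formula (2.6)), weak convergence of the two factors gives $K^n_q\Rightarrow K_q$; and as $K_q$ is continuous everywhere (Theorem 2.3 (iv)), it is atomless and $K^n_q(x)\to K_q(x)$ for every $x$.

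For the functions $F^n_i$ I would argue differently in the two cases. For $F_1$ the closed form (2.20) gives $F^n_1(u)=\big(\varphi^n(q)-\Phi^n(q)\big)\Phi^n(q)^{-1}e^{-\varphi^n(q)u}$, and since $F^n_1(0)=\varphi^n(q)/\Phi^n(q)-1\to\varphi(q)/\Phi(q)-1=F_1(0)$ with no case distinction in (2.12), the convergence of $\varphi^n(q),\Phi^n(q)$ yields $F^n_1\to F_1$ uniformly on all of $[0,\infty)$. For $F_2$ I would use that by Theorem 2.3 (ii) each $e^{\varphi^n(q)x}F^n_2(x)$ is monotone, while the limit $e^{\varphi(q)x}F_2(x)$ is monotone and continuous (Theorem 2.3 (iii)). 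The Laplace transform (2.18), together with the convergence of the Wiener--Hopf factors and of $\varphi^n(q)$, shows that the transforms of $d\big(e^{\varphi^n(q)x}F^n_2(x)\big)$ converge, their limit agreeing with the transform of $d\big(e^{\varphi(q)x}F_2(x)\big)$ away from a possible atom at $0$; by the continuity theorem for Laplace transforms and a P\'olya-type argument (monotone functions with a continuous limit converge locally uniformly), this gives $F^n_2\to F_2$ locally uniformly on the \emph{open} half-line $(-\infty,0)$. The one place this can fail is $x=0$: by (2.13), $F^n_2(0)\to\varphi(q)/\Phi(q)-1$, which differs from $F_2(0)$ by $\tfrac{\varphi(q)}{\Phi(q)}\tfrac{\delta}{d}$ exactly when the limit $X$ has bounded variation, this defect being the extra atom at $0$ just mentioned.

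Finally I would pass to the limit in the integrals. For $y>b$, put $g_n(z)=F^n_1(y-x-z)$, $g(z)=F_1(y-x-z)$ on $I=[b-x,y-x]$ and split $\int_I g_n\,dK^n_q=\int_I g\,dK^n_q+\int_I(g_n-g)\,dK^n_q$; the second term is at most $\big(\sup_I|F^n_1-F_1|\big)K^n_q(I)\to0$ by uniform convergence of $F^n_1$, while the first tends to $\int_I g\,dK_q$ by the portmanteau theorem, since $g$ is continuous and bounded and $K_q$ charges neither endpoint of $I$. With $K^n_q(y-x)\to K_q(y-x)$ this yields the first line of (3.12). For $y<b$ the same scheme applies with $F^n_2$, but because $F^n_2\to F_2$ only locally uniformly on $(-\infty,0)$ I would first truncate: split $[y-x,b-x]$ into $[y-x,y-x+\varepsilon]$ (near the bad endpoint $z=y-x$, where the argument of $F_2$ reaches $0$) and the remainder. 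On the remainder $F^n_2\to F_2$ uniformly and the previous argument applies; the small piece is controlled using the uniform bound on $F^n_2$ from (2.19) and $\limsup_n K^n_q([y-x,y-x+\varepsilon])\le K_q([y-x,y-x+\varepsilon])\to0$ as $\varepsilon\downarrow0$, after which letting $\varepsilon\downarrow0$ finishes the case. The main obstacle is precisely this joint passage to the limit: both the integrand and the measure move with $n$, and---unlike for $F_1$---the convergence of $F^n_2$ degenerates at $z=y-x$ in the bounded-variation case, so the error cannot be bounded by a single supremum. It is the atomlessness of $K_q$ (Theorem 2.3 (iv)) that neutralises this endpoint defect and makes the strict restriction $y<b$ the natural one.
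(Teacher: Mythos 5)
Your proposal is correct and follows the same overall architecture as the paper's proof: convergence of the running extrema via the bounds $|\overline X^n_t-\overline X_t|\le\sup_{s\le t}|X^n_s-X_s|$ (the paper's (3.12)), hence convergence of the Wiener--Hopf factors and of $\Phi^n(q),\varphi^n(q)$, pointwise convergence $K^n_q(x)\to K_q(x)$ from the continuity of $K_q$ (Theorem 2.3 (iv)), and then a joint limit in the integral terms where both integrand and measure move with $n$. The differences lie in two places. First, for $F_1$ you read off uniform convergence on $[0,\infty]$ directly from the closed form (2.20), $F^n_1(u)=\frac{\varphi^n(q)-\Phi^n(q)}{\Phi^n(q)}e^{-\varphi^n(q)u}$, which is legitimate since each $X^n$ of the form (3.1) is spectrally negative; the paper instead proves this as Lemma 3.6 via the Laplace-transform identity (2.16), the continuity theorem for Laplace transforms, and a monotonicity (P\'olya/Dini-type) argument. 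Your route is shorter but tied to spectral negativity, while the paper's transform route is precisely the template reused in Section 4, where $X$ has two-sided jumps and no closed form exists. Second, and more substantively: the paper proves only the case $y>b$, writing the integral as $\mathbb E\big[F^n_1(y-x-Z^n_1)\mathbf 1_{\{b-x<Z^n_1<y-x\}}\big]$ and invoking weak convergence plus bounded convergence (an extended continuous-mapping argument, essentially equivalent to your split $\int g\,dK^n_q+\int(g_n-g)\,dK^n_q$ with the portmanteau theorem), and dismisses $y<b$ as ``similar''. You correctly identify that it is not literally similar: since each $X^n$ has $\sigma>0$, (2.13) gives $F^n_2(0)\to\varphi(q)/\Phi(q)-1$, which differs from $F_2(0)$ when the limiting $X$ has bounded variation (cf.\ Remark 2.2), so convergence of $F^n_2$ degenerates at the endpoint where its argument reaches $0$, i.e.\ at $z=y-x$; your truncation near that endpoint, combined with the uniform bound (2.19) and $\limsup_n K^n_q([y-x,y-x+\varepsilon])\le K_q([y-x,y-x+\varepsilon])\to0$ by atomlessness of $K_q$, is exactly the patch required, and it explains why the strict inequality $y<b$ is the natural restriction. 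In short, your argument matches the paper where the paper is explicit and supplies a careful justification for the case the paper leaves implicit.
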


\begin{proof}
We only prove the result for $y>b$ and omit the corresponding details for the case of $y < b$ since they are similar.

Assume that $y>b$. It is known that (see, e.g., Lemma 13.4.1 in~\cite{Wh})
\begin{equation}
|\overline{X}^n_t-\overline{X}_t|\leq \sup_{0 \leq s \leq t}|X^n_s-X_s|\ and \ |\underline{Y}^n_t-\underline{Y}_t|\leq \sup_{0 \leq s \leq t}|Y^n_s-Y_s|.
\end{equation}
For $s>0$, formulas (3.4) and (3.12) yield
\begin{equation}
\lim_{n \uparrow \infty}\mathbb E\left[e^{-s \overline{X}^n_{e(q)}}\right]=\mathbb E\left[e^{-s \overline{X}_{e(q)}}\right] \ and \ \lim_{n \uparrow \infty}\hat{\mathbb E}\left[e^{s \underline{Y}^n_{e(q)}}\right]=\hat{\mathbb E}\left[e^{s \underline{Y}_{e(q)}}\right],
\end{equation}
which means that $\overline{X}^n_{e(q)}$ and $\underline{Y}^n_{e(q)}$ converge respectively to $\overline{X}_{e(q)}$ and $\underline{Y}_{e(q)}$ in distribution. And since $K_q(x)$ is continuous on $(-\infty,\infty)$ (see Theorem 2.3 (iv)), we have
\begin{equation}
\lim_{n \uparrow \infty} K_q^n(x) =K_q(x), \ \ x  \in  \mathbb R.
\end{equation}

For $y>b$, we can write $\int_{b-x}^{y-x}F^n_1(y-x-z)K^n_q(dz)$  as
\begin{equation}
\mathbb E\left[F_1^n(y-x-Z_1^n)\textbf{1}_{\{b-x<Z_1^n < y-x\}}\right],
\end{equation}
where the law of $Z_1^n$ is given by $K_q^n(z)$. The following Lemma 3.6 (i) gives that if $x_n \rightarrow x >0$, then
\begin{equation}
\lim_{n \uparrow \infty}F_1^n(x_n)=F_1(x).
\end{equation}
Besides, as $F_1^n(x)$ for $n=1,2,\ldots$, are uniformly bounded (see Lemma 3.6 (ii)), we deduce from (3.14), (3.15), (3.16) and the bounded convergence theorem that
\[
\lim_{n \uparrow \infty} \mathbb E\left[F_1^n(y-x-Z_1^n)\textbf{1}_{\{b-x<Z_1^n < y-x\}}\right]=\mathbb E\left[F_1(y-x-Z_1)\textbf{1}_{\{b-x<Z_1 < y-x\}}\right],
\]
where the distribution of $Z_1$ is given by $K_q(z)$.
From (3.14) and the last formula, letting $n \uparrow \infty$ in (3.7) will yield the result in (3.11) for $y>b$.
\end{proof}

\begin{Lemma}
For the continuous functions $F_1(x)$ in (2.7) and $F_1^n(x)$ in (3.8), we have the following results.

(i) $F_1^n(x)$ is uniformly convergence to $F_1(x)$ on $[0,\infty]$.

(ii) $F_1(x)$, $F_1^1(x), F_1^2(x)$, $\ldots$, are uniformly bounded.
\end{Lemma}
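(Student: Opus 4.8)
The plan is to reduce the whole statement to the convergence of the two pairs of parameters $\Phi^n(q)\to\Phi(q)$ and $\varphi^n(q)\to\varphi(q)$, exploiting the fact that $F_1$ and $F_1^n$ are explicit exponentials. Indeed, each $X^n$ in (3.1) has $\sigma>0$, hence has paths of unbounded variation and belongs to $\mathbb S\mathbb N\mathbb L\mathbb P$; thus Theorem 2.3 (i) applies verbatim to $X^n$, and the computation leading to (2.20) gives the closed forms
\[
F_1(x)=\frac{\varphi(q)-\Phi(q)}{\Phi(q)}\,e^{-\varphi(q)x},\qquad F_1^n(x)=\frac{\varphi^n(q)-\Phi^n(q)}{\Phi^n(q)}\,e^{-\varphi^n(q)x},\qquad x\ge 0.
\]
Writing $c:=\frac{\varphi(q)-\Phi(q)}{\Phi(q)}$ and $c_n:=\frac{\varphi^n(q)-\Phi^n(q)}{\Phi^n(q)}$, everything is then governed by the scalars $c_n$ and the exponents $\varphi^n(q)$.

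Next I would establish the parameter convergence. For $\Phi^n(q)\to\Phi(q)$ I use (3.13), which gives $\mathbb E[e^{-s\overline X^n_{e(q)}}]\to\mathbb E[e^{-s\overline X_{e(q)}}]$ for each $s>0$; expressing both sides through the positive Wiener--Hopf factor (2.4) as $\frac{\Phi^n(q)}{\Phi^n(q)+s}$ and $\frac{\Phi(q)}{\Phi(q)+s}$ and solving forces $\Phi^n(q)\to\Phi(q)$. For $\varphi^n(q)\to\varphi(q)$ I note that $Y^n_t=X^n_t-\delta t$ and $Y_t=X_t-\delta t$ obey $\sup_{s\le t}|Y^n_s-Y_s|=\sup_{s\le t}|X^n_s-X_s|\to 0$ almost surely by (3.4); hence, exactly as in (3.12)--(3.13), $\overline Y^n_{e(q)}\to\overline Y_{e(q)}$ in distribution and $\hat{\mathbb E}[e^{-s\overline Y^n_{e(q)}}]\to\hat{\mathbb E}[e^{-s\overline Y_{e(q)}}]$. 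Reading this through the positive Wiener--Hopf factor of $Y$ in (2.5), i.e. $\frac{\varphi^n(q)}{\varphi^n(q)+s}\to\frac{\varphi(q)}{\varphi(q)+s}$, yields $\varphi^n(q)\to\varphi(q)$. Since $\Phi(q)>0$, it follows that $c_n\to c$.

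Part (ii) is then immediate, because $|F_1^n(x)|\le|c_n|\,e^{-\varphi^n(q)x}\le|c_n|$ with $c_n$ convergent (hence bounded) and $|F_1(x)|\le|c|$. For part (i) I would split
\[
|F_1^n(x)-F_1(x)|\le|c_n-c|\,e^{-\varphi^n(q)x}+|c|\,\big|e^{-\varphi^n(q)x}-e^{-\varphi(q)x}\big|\le|c_n-c|+|c|\,\big|e^{-\varphi^n(q)x}-e^{-\varphi(q)x}\big|.
\]
The main obstacle is the uniformity of the last term over the \emph{unbounded} interval $[0,\infty)$: pointwise convergence of the exponentials is trivial, but uniform convergence is not. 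I would control it by the mean value theorem in the exponent parameter, $\big|e^{-\varphi^n(q)x}-e^{-\varphi(q)x}\big|\le|\varphi^n(q)-\varphi(q)|\,x\,e^{-\alpha_n x}$ with $\alpha_n:=\varphi^n(q)\wedge\varphi(q)\ge\varphi(q)/2>0$ for $n$ large, together with $\sup_{x\ge 0}x\,e^{-\alpha_n x}=1/(\alpha_n e)\le 2/(\varphi(q)e)$. Hence the last term is at most $|c|\,|\varphi^n(q)-\varphi(q)|\cdot 2/(\varphi(q)e)\to 0$ uniformly in $x$, and combining with $|c_n-c|\to 0$ gives $\sup_{x\ge 0}|F_1^n(x)-F_1(x)|\to 0$, which is (i).
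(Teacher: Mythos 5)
Your proof is correct, but it takes a genuinely different route from the paper's. The paper never invokes the closed form of $F_1^n$: it proves convergence of the Laplace--Stieltjes transforms $\int_0^\infty e^{-sx}\,dF_1^n(x)\to\int_0^\infty e^{-sx}\,dF_1(x)$ via (2.16), (3.13) and (3.17), applies Feller's continuity theorem for Laplace transforms to get pointwise convergence on $(0,\infty)$, checks the endpoints separately ($F_1^n(0)=\frac{\varphi^n(q)-\Phi^n(q)}{\Phi^n(q)}\to F_1(0)$ and $F_1^n(\infty)=F_1(\infty)=0$), and then upgrades pointwise to uniform convergence on the compactified interval $[0,\infty]$ using only that the $F_1^n$ are monotone and bounded and that the limit $F_1$ is continuous (a Dini/P\'olya-type argument). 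You instead exploit that each $X^n$ in (3.1) has $\sigma>0$ and is spectrally negative, so the computation behind (2.20) applies verbatim to $X^n$, reducing everything to the scalar convergences $\Phi^n(q)\to\Phi(q)$, $\varphi^n(q)\to\varphi(q)$ (which you obtain exactly as the paper does, from (3.13) and the Wiener--Hopf factors (2.4)--(2.5), noting $Y^n-Y=X^n-X$) and a mean-value-theorem estimate $\bigl|e^{-\varphi^n(q)x}-e^{-\varphi(q)x}\bigr|\le|\varphi^n(q)-\varphi(q)|\,x\,e^{-\alpha_n x}$ with $\sup_{x\ge0}x e^{-\alpha_n x}\le 2/(\varphi(q)e)$ for large $n$; your part (ii) coincides with the paper's bound via (2.17). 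Your argument is shorter and entirely elementary, avoiding the continuity theorem; the price is that it is wedded to the spectrally negative setting, where the explicit exponential (2.20) exists for both the limit and the approximants. The paper's soft transform-based proof uses only monotonicity, boundedness and continuity of the $F_1^n$, which is precisely the structure the authors hope to retain for general L\'evy processes in Section 4 (conditions (iii)--(v) defining $\mathbb L$, and Remark 4.4 on $\Pi_1(dx)$), so your route would not transfer to that program, whereas theirs is designed to.
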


\begin{proof}
Similar to (3.13), we can also deduce that
\begin{equation}
\lim_{n \uparrow \infty}\mathbb E\left[e^{s\underline{X}^n_{e(q)}}\right]=\mathbb E\left[e^{s \underline{X}_{e(q)}}\right] \ and \ \lim_{n \uparrow \infty}\hat{\mathbb E}\left[e^{-s \overline{Y}^n_{e(q)}}\right]=\hat{\mathbb E}\left[e^{-s \overline{Y}_{e(q)}}\right],  \ \ s>0.
\end{equation}
In addition, formulas (2.4), (2.5) and (3.13) give
\[
\lim_{n \uparrow \infty}\Phi^n(q)=\Phi(q) \ \ \ and \ \ \lim_{n \uparrow \infty}\varphi^n(q)=\varphi(q).
\]

For  $s>0$, it follows from (2.16), (3.13) and (3.17) that
\begin{equation}
\begin{split}
&\lim_{n \uparrow \infty}\int_{0}^{\infty} e^{-s x}d F_1^n(x)=\lim_{n \uparrow \infty}
\left(\frac{\hat{\mathbb E}\left[e^{-s \overline{Y}^n_{e(q)}}\right]}
{\mathbb E\left[e^{-s \overline{X}^n_{e(q)}}\right]}-\frac{\varphi^n(q)}{\Phi^n(q)}\right)\\
&=\frac{\hat{\mathbb E}\left[e^{-s \overline{Y}_{e(q)}}\right]}
{\mathbb E\left[e^{-s \overline{X}_{e(q)}}\right]}-\frac{\varphi(q)}{\Phi(q)}=\int_{0}^{\infty} e^{-s x}dF_1(x),
\end{split}
\end{equation}

As $F_1(x)$ $\big(F_1^n(x)\big)$ is monotone, bounded and continuous on $[0,\infty)$ (see Theorem 2.3),  we can think simply $F_1(x)$ $\big(F_1^n(x)\big)$ or $F_1(0)-F_1(x)$ $\big(F_1^n(0)-F_1^n(x)\big)$ as a measure. So from (3.18) and the continuity theorem for Laplace transforms (see, e.g., Theorem 2a on page 433 in~\cite{F}), we have
\begin{equation}
\lim_{n \uparrow \infty} F_1^n(x)=F_1(x) \ \ or \ \
\lim_{n \uparrow \infty} \big(F_1^n(0)-F_1^n(x)\big)=F_1(0)-F_1(x), \ \ x > 0.
\end{equation}

Note that (see (2.12))
\[
\lim_{n\uparrow \infty}F_1^n(0)=\lim_{n\uparrow \infty}\frac{\varphi^n(q)-\Phi^n(q)}{\Phi^n(q)}
=\frac{\varphi(q)-\Phi(q)}{\Phi(q)}=F_1(0),
\]
and (see (2.9))
\[
F^n_1(\infty)=F_1(\infty)=0.
\]
The above three formulas yield
\[
\lim_{n\uparrow \infty}F_1^n(x) = F_1(x), \ \ for \ \ all \ \  x \in [0,\infty].
\]
Since $F_1^n(x)$ and $F_1(x)$ are  monotone, bounded and continuous on $[0,\infty)$, we obtain  that $F_1^n(x)$ is uniformly convergence to $F_1(x)$ on $[0,\infty]$.

In addition, we have (see (2.17))
\[
-1\leq F_1^n(x), F_1(x) \leq \sup_{n\geq 1}{|\frac{\varphi^n(q)-\Phi^n(q)}{\Phi^n(q)}|}<\infty.
\]
\end{proof}

\subsection{Proof of Theorem 2.1}
We divide the proof of Theorem 2.1 into three steps, where in the first step, we use some ideas given in the proofs of Proposition 15 and Lemma 21 in~\cite{KL}.

Step 1. From Lemma 3.5, to prove that $U^{\infty}$ is a strong solution to (1.1) is equivalent to show that $\mathbb P_x\left(U_t^{\infty}=b\right)=0$ for Lebesgue almost every $t>0$.

Recall Lemma 3.4.
For any $\varepsilon > 0$, it is obvious that
\begin{equation}
\{U_t^{\infty}= b\} \subseteq \{U^n_t \in (b-\varepsilon,b+\varepsilon)\ \ eventually \ \ as \ \ n\uparrow\infty\}.
\end{equation}
It follows from standard measure theory that
\begin{equation}
\mathbb P_x\left(U_t^{\infty}=b\right)\leq \underline{\lim}_{n \uparrow \infty}\mathbb P_x\left(b- \epsilon < U^n_t < b+\epsilon \right).
\end{equation}
Now Fatou's lemma leads to that
\begin{equation}
\begin{split}
&\int_{0}^{\infty} q e^{-q t}\mathbb P_x\left(U_t^{\infty}=b\right)dt=\mathbb P_x\left(U^{\infty}_{e(q)}=b\right)\\
& \leq \underline{\lim}_{n \uparrow \infty}\mathbb P_x\left(b- \epsilon \leq U^n_{e(q)}  \leq b+\epsilon \right)\\
&=\underline{\lim}_{n \uparrow \infty} \left(\mathbb P_x\left(U^n_{e(q)} \leq  b+\epsilon \right)-\mathbb P_x\left(U^n_{e(q)} < b-\epsilon \right)\right).
\end{split}
\end{equation}

Step 2. It follows from (3.11) that
\begin{equation}
\begin{split}
&\lim_{n \uparrow \infty}\left(\mathbb P_x\left(U^n_{e(q)} \leq b+\epsilon \right)-\mathbb P_x\left(U^n_{e(q)} < b-\epsilon \right)\right)\\
&=K_q(b+\epsilon-x)+\int_{b-x}^{b+\epsilon-x}F_1(b+\epsilon-x-z)K_q(dz)\\
&-
K_q(b-\epsilon-x)+\int_{b-\epsilon-x}^{b-x}F_2(b-\epsilon-x-z)K_q(dz).
\end{split}
\end{equation}

Obviously, we have (since Theorem 2.3 (iv) holds)
\begin{equation}
\begin{split}
&\lim_{\varepsilon \downarrow 0}\big(K_q(b+\epsilon-x)-K_q(b-\epsilon-x)\big)=0.
\end{split}
\end{equation}

Recalling the fact that both $F_1(x)$ and $F_2(-x)$ are continuous and bounded on $[0,\infty]$ (see Theorem 2.3), we can derive
\begin{equation}
\begin{split}
&\lim_{\varepsilon \downarrow 0} \int_{b-x}^{b+\epsilon-x}F_1(b+\epsilon-x-z)K_q(dz)=0
=\lim_{\varepsilon \downarrow 0}\int_{b-\epsilon-x}^{b-x}F_2(b-\epsilon-x-z)K_q(dz).
\end{split}
\end{equation}

Therefore, for any given $q>0$, we obtain from (3.22) $\sim$ (3.25) that
\begin{equation}
\begin{split}
&\mathbb P_x\left(U^{\infty}_{e(q)}=b\right)
 \leq \lim_{\varepsilon \downarrow 0} \lim_{n \uparrow \infty}\mathbb P_x\left(b- \epsilon < U^n_{e(q)}  < b+\epsilon \right)=0,
\end{split}
\end{equation}
which gives us the desired result that $\mathbb P_x\left(U_t^{\infty}=b\right)=0$ for Lebesgue almost every $t>0$.

Step 3. The proof of $\mathbb P_x\left(U^{\infty}_{e(q)}=y\right)=0$ for $y \neq b$ is similar to that of (3.26).  For example, for $y > b$ and $0< \varepsilon < y-b$,
similar to deriving (3.22), we have

\begin{equation}
\begin{split}
&\mathbb P_x\left(U^{\infty}_{e(q)}=y\right)
\leq  \lim_{n \uparrow \infty}\left(\mathbb P_x\left(U^n_{e(q)} \leq y+\epsilon \right)- \mathbb P_x\left(U^n_{e(q)} \leq y-\epsilon \right)\right).
\end{split}
\end{equation}
Since $y-\epsilon >b$. From (3.11), we can write the right-hand side of (3.27) as
\begin{equation}
\begin{split}
&K_q(y+\epsilon-x)+\int_{b-x}^{y+\epsilon-x}F_1(y+\epsilon-x-z)K_q(dz)\\
&-K_q(y-\epsilon-x)-\int_{b-x}^{y-\epsilon-x}F_1(y-\epsilon-x-z)K_q(dz),
\end{split}
\end{equation}
which combined with Theorem 2.3 and  the bounded convergence theorem, gives $\mathbb P_x\left(U^{\infty}_{e(q)}=y\right)=0$ for $y > b$ after letting $\varepsilon \downarrow 0$ in (3.28).

\subsection{Proof of Theorem 2.2}
From the above subsection, we know that $U^{\infty}$ is a strong solution to (1.1) and write $U^{\infty}$ as $U$ for the sake of brevity. From (2.14), we have $\mathbb P\left(U_{t}=z\right)=0$ for Lebesgue almost every $t>0$, which combined with (3.5), leads to
\[
\lim_{n \uparrow \infty}\mathbb P_x\left(U^{n}_{e(q)} > z\right)=
\mathbb P_x\left(U_{e(q)}> z\right), \lim_{n \uparrow \infty}\mathbb P_x\left(U^{n}_{e(q)} < z\right)=
\mathbb P_x\left(U_{e(q)}< z\right), \ \ z \in \mathbb R.
\]
The last formula and Proposition 3.1 will lead to
\[
\left\{\begin{array}{cc}
\mathbb P_x\left(U_{e(q)}\leq y\right)=K_q(y-x)+\int_{b-x}^{y-x}F_1(y-x-z)K_q(dz), &  y \geq b,\\
\mathbb P_x\left(U_{e(q)}< y\right)=K_q(y-x)-\int_{y-x}^{b-x}F_2(y-x-z)K_q(dz), &   y\leq b,
\end{array}\right.
\]
from which Theorem 2.2 is derived.

\section{Conclusion Remarks}
In this paper, we use a different approach to derive the following results: there is a strong solution $U$ to (1.1) if $X$ is a spectrally negative L\'evy process with some additional conditions satisfied and the expression of the q-potential measures of $U$ can be written in terms of Winer-Hopf factors. In the following, some remarks and discussions on the extension of the above two results are presented.
In this section, the process $X$ in (1.1) is a general L\'evy process. And for a general L\'evy process $X$, the three functions $K_q(x)$, $F_1(x)$ and $F_2(x)$ are defined still by (2.6), (2.7) and (2.8), respectively.

Consider first the following jump diffusion process $X$:
\begin{equation}
  X_t = X_0 + \mu t+\sigma W_t +\sum_{k=1}^{N^+_t}Z^+_k-\sum_{k=1}^{N^-_t}Z^-_k,
\end{equation}
where $\sigma >0$ and the probability density functions of $Z^+_1$ and $Z_1^-$ are given respectively by
\begin{equation}
p^+(z)=\frac{\mathbb P\left(Z^+_1 \in dz\right)}{dz}=\sum_{k=1}^{m^+}\sum_{j=1}^{m_k}c_{kj}\frac{(\eta_k)^jz^{j-1}}{(j-1)!}e^{-\eta_k z}, \ \ z > 0,
\end{equation}
and
\begin{equation}
p^-(z)=\frac{\mathbb P\left(Z^-_1 \in dz\right)}{dz}=\sum_{k=1}^{n^-}\sum_{j=1}^{n_k}d_{kj}\frac{(\vartheta_k)^jz^{j-1}}{(j-1)!}e^{-\vartheta_k z}, \ \ z > 0.
\end{equation}

For any given L\'evy process $X$, a similar result to Lemma 3.3 also holds (see Proposition 1 in~\cite{AAP}), i.e., we can find a sequence of $X^n$ having the form of (4.1) such that
\begin{equation}
\lim_{n \uparrow \infty} \sup_{s \in [0,t]}|X^n_s-X_s|=0, \ \ almost \ \ surely.
\end{equation}

For any given and fixed L\'evy process $X$, denote by $X^n$, $n\geq 1$, the corresponding sequence such that (4.4) holds. For each $X^n$, Lemma 3.1 also holds and we denote by $U^n$ the corresponding unique strong solution to (1.1). After looking carefully the proof of Lemma 12 in~\cite{KL}, we find that Lemma 12 in~\cite{KL} (i.e., Lemma 3.4) holds also for the above process $U^n$, i.e., there is a process $U^{\infty}$ such that
\begin{equation}
\lim_{n \uparrow \infty} \sup_{s \in [0,t]}|U^n_s-U_s^{\infty}|=0, \ \ almost \ \ surely.
\end{equation}
Besides, in~\cite{Zhouthe}, we have showed that Lemma 3.2 holds also for the above $X^n$ and $U^n$, i.e.,
\begin{equation}
\left\{\begin{array}{cc}
\mathbb P_x\left(U^n_{e(q)}\leq y\right)=
K^n_q(y-x)+\int_{b-x}^{y-x}F^n_1(y-x-z)K^n_q(dz), & y\geq b,\\
\mathbb P_x\left(U^n_{e(q)}< y\right)=
K^n_q(y-x)-\int_{y-x}^{b-x}F^n_2(y-x-z)K^n_q(dz), & y\leq b.
\end{array}\right.
\end{equation}

Denote by $\mathbb L$ the space of all L\'evy processes such that the following conditions holds.\\
(i)
\begin{equation}
\lim_{s \uparrow \infty}\frac{\hat{\mathbb E}\left[e^{-s \overline{Y}_{e(q)}}\right]}
{\mathbb E\left[e^{-s \overline{X}_{e(q)}}\right]}< \infty \ \ and \ \ \lim_{s \uparrow \infty}\frac{\mathbb E\left[e^{s \underline{X}_{e(q)}}\right]}
{\hat{\mathbb E}\left[e^{s \underline{Y}_{e(q)}}\right]}< \infty \ \ if \ \ \delta>0.
\end{equation}
(ii) $K_q(x)$ is continuous  on  $(-\infty, \infty)$.\\
(iii) $F_1(x)$ ($F_2(-x)$) is continuous and bounded on $[0,\infty)$.\\
(iv) $F_1^n(x)$ ($F^n_2(-x)$) is uniformly convergence to $F_1(x)$ ($F_2(-x)$) on $(0,\infty)$.\\
(v)  For $i=1,2$, $F_i(x)$, $F_i^1(x), F_i^2(x)$, $\ldots$, are uniformly bounded.

From (2.10), (2.11), Theorem 2.3 and Lemma 3.6, we know $\mathbb S \mathbb N\mathbb L\mathbb P$ is a subset of $\mathbb L$. In the following, we intend to illustrate that Theorems 2.1 and 2.2 hold for $X \in \mathbb L$.

First, applying (4.6), (ii), (iv), (v) and the result of $\lim_{n \uparrow \infty} K_q^n(x) =K_q(x)$ for $x \in \mathbb R$ (whose proof is the same as that of (3.14) since condition (ii) holds) gives that formula (3.11) also holds for the above process $U^n$, i.e.,
\[
\left\{\begin{array}{cc}
\lim_{n \uparrow \infty}\mathbb P_x\left(U^n_{e(q)}\leq y\right)=
K_q(y-x)+\int_{b-x}^{y-x}F_1(y-x-z)K_q(dz), & y>b,\\
\lim_{n\uparrow \infty}\mathbb P_x\left(U^n_{e(q)}< y\right)=
K_q(y-x)-\int_{y-x}^{b-x}F_2(y-x-z)K_q(dz), & y<b.
\end{array}\right.
\]
Then from (ii), (iii) and the last formula, we can derive that $\mathbb P_x\left(U_t^{\infty}=b\right)=0$  for Lebesgue almost  every  $t>0$ (see Step 2 in Section 3.3), thus $U^{\infty}$ is a strong solution to (1.1) (see Lemma 3.5).
Similar derivations will derive $\mathbb P_x\left(U_{e(q)}^{\infty}=y\right)=0$ for $ y\neq b$ (see Step 3 in Section 3.3). Finally, the proof that Theorem 2.2 holds also for $X \in \mathbb L$ is almost the same as that in Section 3.4.

Therefore, the most important part of future researches is to show that which L\'evy process belongs to $\mathbb L$. To close this paper, we present some remarks on the above five conditions.
\begin{Remark}
The condition (4.7) is important for the existence of a strong solution to (1.1), and it is possible that  equation (1.1) does not exist a solution if this condition is failed, which will be illustrated in the following. Assume that $X$ is a compound Poisson process with positive drift, i.e.,
\begin{equation}
X_t=X_0+\mu t +\sum_{i=1}^{N_t} Z_i, \ \ t\geq 0,
\end{equation}
where $\mu > 0$. For $\delta > \mu >0$, we have
\[
Y_t=X_t-\delta t=X_0+(\mu-\delta) t +\sum_{i=1}^{N_t} Z_i, \ \ t\geq 0.
\]
Since $\mu>0$ and $\mu-\delta < 0$, we have $\hat{\mathbb P}\left(\overline{Y}_{e(q)}=0\right)> 0$ and $\mathbb P\left(\overline{X}_{e(q)}=0\right)=0$, thus
$
\lim_{s \uparrow \infty}\frac{\hat{\mathbb E}\left[e^{-s \overline{Y}_{e(q)}}\right]}
{\mathbb E\left[e^{-s \overline{X}_{e(q)}}\right]}=\infty,
$
which means that formula (4.7) is false. To this end, it is easy to see that there is no solution to (1.1) with $X$ given by (4.8) and $X_0=b$.

In addition, this condition is necessary for the boundedness of $F_1(x)$ since
\[
F_1(0)=\lim_{s\uparrow \infty}\int_{0}^{\infty}se^{-sx}F_1(x)dx=\lim_{s\uparrow \infty}\left(\frac{\hat{\mathbb E}\left[e^{-s \overline{Y}_{e(q)}}\right]}
{\mathbb E\left[e^{-s \overline{X}_{e(q)}}\right]}-1\right).
\]
\end{Remark}

\begin{Remark}
For the second condition, if $0$ is regular for $(0,\infty)$ or $(-\infty,0)$, i.e., $\mathbb P\left(\tau_0^+=0\right)=1$ with $\tau_0^+=\inf\{t>0, X_t> 0\}$ or $\hat{\mathbb P}\left(\hat{\tau}_0^-=0\right)=1$ with $\hat{\tau}_0^-=\inf\{t>0, Y_t<0\}$, then
$\mathbb P\left(\overline{X}_{e(q)}=z\right)=0$ or $\hat{\mathbb P}\left(\underline{Y}_{e(q)}=z\right)=0$  for all  $z \in \mathbb R$,
which means that $K_q(x)$ is continuous (see (2.6)). From Theorem 6.5 in~\cite{K}, we know that condition (ii)
holds for almost all L\'evy process. For example, if $X$ is a spectrally negative L\'evy process or has paths of unbounded variation, then $0$ is regular for $(0,\infty)$ and thus $K_q(x)$ is continuous. If $\delta >0$, Theorem 6.5 in~\cite{K} gives that either
$\mathbb P\left(\tau_0^+=0\right)=1$ or $\hat{\mathbb P}\left(\hat{\tau}_0^-=0\right)=1$ must hold. So the second condition holds for all L\'evy process if $\delta >0$.
\qed
\end{Remark}

\begin{Remark}
A situation which makes condition (ii) incorrect is that $X$ has paths of bounded variation with negative  drift term meanwhile $Y$ has positive drift term, i.e.,
\[
X_t=d t +J_t \ \ and \ \ Y_t=(d-\delta)t +J_t,
\]
where $\delta <d< 0$ and $J_t$ is a pure jump L\'evy process with bounded variation paths. Because in this situation, both $\mathbb P\left(\overline{X}_{e(q)}=0\right)$ and $\hat{\mathbb P}\left(\underline{Y}_{e(q)}=0\right)$ are positive. So we can think that the first condition  puts some limitations on the case of $\delta >0$ while the second one
gives some limitations on the case of $\delta <0$.
\qed
\end{Remark}

\begin{Remark}
For the remaining three conditions (particularly the last two): (iii), (iv) and (v), at present we can not find an easy approach to show what kind of L\'evy process satisfies them. An potential research direction is considering the quantity $\Pi_1(dx)$ given by (2.23). For example, for $\delta <0$, if we can prove that $\Pi_1(dx)$ is a measure for a general L\'evy process $X$, then we will obtain that $F_1(x)+1$ is an infinitely divisible distribution (see (2.22)) and derive that $F_1^n(x)$ is uniformly convergence to $F_1(x)$ on $(0,\infty)$ (see the derivation of Lemma 3.6). In fact, if $\Pi_1(dx)$ is a measure for a general L\'evy process when $\delta <0$, then we can use similar ideas in~\cite{wuetal} to prove that conditions (iv) and (v) hold for a general L\'evy process and any given $\delta \in \mathbb R$.
However, to show that $\Pi_1(dx)$ is a measure if $\delta <0$ is a difficult work (which has not been considered before to the best of our knowledge) and is left to future research.
\end{Remark}

%

\bigskip

\end{document}